\newtheoremstyle{morespacestyle}
  {5mm}
  {5mm}
  {\it}
  {0pt}
  {\bfseries}
  {.}
  {2mm}
  {}
\newtheoremstyle{nonitalicstyle}
  {5mm}
  {5mm}
  {}
  {0pt}
  {\bfseries}
  {.}
  {2mm}
  {}
\newtheorem{theorem}{Theorem}[section]
\newtheorem{lemma}[theorem]{Lemma}
\newtheorem{proposition}[theorem]{Proposition}
\newtheorem{corollary}[theorem]{Corollary}
\theoremstyle{nonitalicstyle}
\newtheorem{remark}[theorem]{Remark}
\numberwithin{equation}{section}
\newcommand{\abs}[1]{\left| \hspace{1pt} #1 \hspace{1pt} \right|}
\newcommand{\supp}[1]{\mathop{\mathrm{supp}}\left( #1\right)}
\newcommand{\close}[1]{\overline{#1}}
\newcommand{\mr}[1]{\mathrm{#1}}
\newcommand{\diam}{\mathop{\mathrm{diam}}}
\newcommand{\hrs}[1]{\hspace{#1pt}}
\newcommand{\vrs}[1]{\vspace{#1pt}}
\newcommand{\Arg}[1]{\mathrm{Arg}\hrs{1}{#1}} 
\newcommand{\integ}[1]{\mathbb{Z}^{\hrs{0}#1}}
\newcommand{\real}[1]{\mathbb{R}^{\hrs{0}#1}}
\newcommand{\compx}[1]{\mathbb{C}^{\hrs{0}#1}}
\newcommand{\td}[1]{\widetilde{#1}}
\newcommand{\wh}[1]{\widehat{#1}}
\newcommand{\rhull}[1]{\mathcal{R}\mathrm{-hull}\left( {#1} \right)}
\newcommand{\Chi}{\mathcal{X}}
\newcommand{\ee}{\varepsilon}
\newcommand{\dd}{\delta}
\newcommand{\pdi}[2]{\frac{\partial {#1}}{\partial {#2}}}
\renewcommand{\it}[1]{\textit{#1}}
\renewcommand{\bf}[1]{\textbf{#1}}
\title{A Characterization of Rationally Convex Immersions}
\author{Octavian Mitrea}
\address{Department of Mathematics, the University of Western Ontario, London, Ontario, N6A 5B7, Canada}
\date{}
\begin{document}
\maketitle

\begin{abstract}
Let $S$ be a smooth, totally real, compact immersion in $\compx{n}$ of real dimension $m \leq n$, which is locally polynomially convex and it has finitely many points where it self-intersects finitely many times, transversely or non-transversely. We prove that $S$ is rationally convex if and only if it is isotropic with respect to a ``degenerate" K\"ahler form in $\compx{n}$.
\end{abstract}

\let\thefootnote\relax\footnote{MSC: 32E20,32E30,32V40,53D12.
Key words: rational convexity, polynomial convexity, Lagrangian manifold, K\"ahler form, plurisubharmonic function
}
\let\thefootnote\relax\footnote{
The author is partially supported by the Natural Sciences and Engineering Research Council of Canada.
}

\section{Introduction}\label{sec:1}

The purpose of this paper is to prove the following characterization of a class of rationally convex, totally real immersions in $\compx{n}$ of real manifolds.

\begin{theorem} \label{th1} Let $S$ be a smooth compact manifold of dimension $m \geq 1$, with or without boundary, and let $\iota :S \rightarrow \compx{n}$, $n \geq m$, be an immersion such that $\iota(S)$ is a smooth submanifold of $\compx{n}$, except at finitely many points $p_1, p_2, \dots p_N \in \iota(S)$, where $\iota(S)$ intersects itself finitely many times. Suppose $\iota(S)$ is totally real and locally polynomially convex. Then the following are equivalent:
\begin{enumerate}[(i)]
\item $\iota(S)$ is rationally convex;
\item There exist contractible neighborhoods $W_j$ of $p_j$ in $\iota(S)$, $j=1, \dots, N$, such that for every neighborhood $\Omega$ of $\iota(S)$, there exist neighborhoods $U_j \subset V_j$ of $p_j$ in $\compx{n}$, $j=1, \dots, N$, with $\{\close{V_j}\}_j$ pairwise disjoint, and a smooth plurisubharmonic function $\varphi : \compx{n} \rightarrow \real{}$, satisfying the following properties:
\begin{enumerate}
\item $U_j \cap \iota(S) = W_j$, $j=1,\dots,N$;
\item $\cup_{j=1}^N V_j$ is compactly included in $\Omega$;
\item $dd^c \varphi =0$ on $\cup_{j=1}^N U_j$;
\item $\varphi$ is strictly plurisubharmonic on $\compx{n} \setminus \cup_{j=1}^N\close{V_j}$;
\item $\iota^* dd^c \varphi = 0$.
\end{enumerate}

\end{enumerate}
\end{theorem}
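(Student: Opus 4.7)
The plan is to establish the two implications separately, extending the classical Duval--Sibony characterization of smooth totally real rationally convex compacts as isotropic submanifolds of some K\"ahler form on $\compx{n}$ to the present setting of immersions with finitely many self-intersections. The K\"ahler form is permitted to degenerate on small neighborhoods of the singular set $\{p_1, \dots, p_N\}$; this is the ``degenerate K\"ahler form'' referenced in the abstract.

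\textbf{Direction $(ii) \Rightarrow (i)$.} To show rational convexity, fix $q \in \compx{n} \setminus \iota(S)$ and choose a small open $\Omega \supset \iota(S)$ with $q \notin \close{\Omega}$. Apply (ii) to produce $\varphi$, $U_j$, $V_j$ with $\bigcup V_j \Subset \Omega$. Away from $\bigcup \close{V_j}$, the $(1,1)$-form $\omega := dd^c \varphi$ is a genuine K\"ahler form annihilating the tangent spaces of the smooth part of $\iota(S)$ by condition (e); near each singular $p_j$, $\iota(S)$ is locally polynomially convex by hypothesis. A Duval--Sibony-type argument then produces an algebraic hypersurface through $q$ missing $\iota(S)$: use strict plurisubharmonicity of $\varphi$ together with $\iota^*\omega = 0$ to build peak plurisubharmonic functions separating $q$ from $\iota(S)$, convert them to polynomials via H\"ormander--Wermer approximation, and dispose of the singular buffer by shrinking the $V_j$ and invoking local polynomial convexity at each $p_j$.

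\textbf{Direction $(i) \Rightarrow (ii)$.} Rational convexity supplies, for each $q$ in a neighborhood of $\partial\Omega$, a polynomial $P_q$ with $P_q(q) = 0$ and $P_q \ne 0$ on $\iota(S)$. A compactness argument produces a finite family $\{P_l\}$ whose zero sets surround $\iota(S)$ within $\Omega$. A sum $\varphi_0 := \sum_l \chi_l(\log|P_l|^2)$ for suitable convex increasing $\chi_l$ with $\chi_l(0)=0$ gives a smooth plurisubharmonic function, strictly so where enough of the $|P_l|$ are large. To enforce $\iota^*dd^c\varphi = 0$ on the smooth part of $\iota(S)$, one adds a correction $dd^c\rho$ supported in a tubular neighborhood of $\iota(S)$ that exactly cancels the tangential Levi form of $\varphi_0$; this is feasible because $\iota(S)$ is totally real. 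Near each singular $p_j$, local polynomial convexity produces a local psh model that is constant (hence has vanishing $dd^c$) on a small $U_j$ and strictly psh outside a slightly larger $V_j$. Patching this local model into the global construction on the annulus $V_j \setminus U_j$ via a regularized maximum yields the $\varphi$ required by (ii).

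\textbf{Main obstacle.} The principal difficulty lies in the patching step of $(i) \Rightarrow (ii)$: on each transition annulus $V_j \setminus U_j$ one must interpolate between a function with $dd^c\equiv 0$ and a strictly plurisubharmonic function while simultaneously preserving global smooth plurisubharmonicity \emph{and} the pointwise isotropy $\iota^*dd^c\varphi = 0$ on the smooth piece of $\iota(S)$ meeting $V_j \setminus U_j$. The contractibility of each $W_j$ (explicitly granted in (ii)) is precisely what is needed to trivialize the relevant closed $(1,1)$-forms on $\iota(S)$ near $p_j$ and so carry out the interpolation; nevertheless, the analytic bookkeeping required to keep all five conditions (a)--(e) satisfied simultaneously, for every prescribed $\Omega$, is where the real work will concentrate.
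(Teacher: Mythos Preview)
Your outline has the right two-implication structure, but both directions diverge from the paper in ways that leave real gaps.

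\textbf{On $(i)\Rightarrow(ii)$.} You propose building $\varphi_0$ as $\sum_l \chi_l(\log|P_l|^2)$ and then \emph{correcting} it to force $\iota^*dd^c\varphi=0$, and you invoke local polynomial convexity to produce a local model near each $p_j$ with $dd^c\equiv 0$ on $U_j$. Both points miss the actual mechanism. The paper applies the Duval--Sibony theorem (their Theorem~2.1) to the thickened set $S_\delta$ with $\rhull{S_\delta}\Subset\Omega$, producing a smooth plurisubharmonic $\psi_\delta$ on $\compx{n}$ with $dd^c\psi_\delta\equiv 0$ on \emph{all} of $\rhull{S_\delta}$. This single stroke gives both the isotropy $\iota^*dd^c\psi_\delta=0$ and the vanishing on neighborhoods $U_j$ of the $p_j$ for free; no tangential correction and no local polynomial convexity are used here (the paper stresses that this direction holds without that hypothesis). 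The genuine work is the opposite of what you describe: one must \emph{add} strict plurisubharmonicity near the smooth part of $S$ while preserving $\iota^*dd^c=0$. The paper does this via a function $\rho_\ee$ built from the square distance to each smooth branch plus a carefully designed bump $\tilde f$ that is constant along the leaves $JT_qM$ of the complex-rotated tangent foliation, so that $\iota^*d^c\tilde f=0$ identically. Your ``cancel the tangential Levi form'' step is not how the isotropy is achieved and would not by itself produce the flat region $dd^c\varphi=0$ on an \emph{open} $U_j$.

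\textbf{On $(ii)\Rightarrow(i)$.} Your description (``Duval--Sibony-type argument \dots\ build peak plurisubharmonic functions \dots\ convert to polynomials via H\"ormander--Wermer'') is too schematic to be a proof and omits the central object. The paper follows Gayet's lemma: one needs a smooth $h:\compx{n}\to\compx{}$ with $|h|\le e^\varphi$, $|h|=e^\varphi$ to first order on $S$, $\bar\partial h=O(\mathrm{dist}(\cdot,S)^{(3n+5)/2})$, and $h$ holomorphic near each $p_j$. Constructing such an $h$ requires first perturbing $\varphi$ so that the closed $1$-form $\iota^*d^c\varphi$ has periods in $2\pi\integ{}$ (this is where contractibility of the $W_j$ and the choice of a basis of $H_1$ supported away from the $U_j$ enter), then setting $h=e^{\varphi+i\mu}$ with $\mu=\int d^c\varphi$. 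Local polynomial convexity is used \emph{afterwards}, not to ``dispose of the singular buffer'' directly, but to furnish a nonnegative continuous plurisubharmonic $\rho$ with $\{\rho=0\}=S$ that is strictly plurisubharmonic off small balls around the $p_j$; adding $C\rho$ to $\varphi$ makes $\varphi$ strictly plurisubharmonic on the annular region $V_j\setminus Z_j$ where neither holomorphy of $h$ nor strict plurisubharmonicity of the original $\varphi$ is guaranteed. Your sketch does not isolate either the period-adjustment step or this specific use of local polynomial convexity.
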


A first characterization of the rational convexity of a smooth totally real compact submanifold $S \subset \compx{2}$ was given by Duval \cite{12}, \cite{13}, who showed that $S$ is rationally convex if and only if $S$ is Lagrangian with respect to some K\"ahler form in $\compx{2}$. Subsequently, applying a different method that makes use of H\"ormander's $L^2$ estimates, Duval and Sibony \cite{1} extended the result to totally real embeddings of any dimension less than or equal to $n$. It is thanks to these remarkable results that the intrinsic connection between rational convexity and symplectic properties of real submanifolds has been revealed. In \cite{2} Gayet analyzed totally real immersions in $\compx{n}$ of maximal dimension, with finitely many transverse double self-intersection points, showing that being Lagrangian with respect to some K\"ahler form in $\compx{n}$ is a sufficient condition for such immersions to be rationally convex. A similar result was proved later by Duval and Gayet \cite{14} for immersions of maximal dimension with certain non-transverse intersections.

Theorem \ref{th1} gives a characterization of the rational convexity of a more general class of immersions in $\compx{n}$ with finitely many self-intersection points: we do not impose any restrictions on the real dimension of such immersions and do not require the self-intersections to be transverse or double. The proof of the theorem spans two sections of this paper. In Section \ref{sec:3} we prove Proposition~\ref{prp:1} which shows that the implication $(i) \Rightarrow (ii)$ in Theorem~\ref{th1} is true. It is important to note that the proof of Proposition~\ref{prp:1} does not require $S$ to be polynomially convex near the singular points. Proposition~\ref{prp:2} proved in Section \ref{sec:4} shows that the other direction of Theorem~\ref{th1} holds true. In the proof we follow closely the method introduced in \cite{1} (see also \cite{2}, \cite{6}). The condition for $S$ to be polynomially convex near the singular points plays a key role in the proof. Note, however, that all the submanifolds considered in \cite{12}, \cite{13}, \cite{1}, \cite{2} are polynomially convex near every point. This is a classic result for smooth totally real embeddings and, in the case of the class of immersions considered in \cite{2}, the property follows from a result of Shafikov and Sukhov \cite[Theorem 1.4]{10} who showed that every Lagrangian immersion with finitely many transverse self-intersections is locally polynomially convex. The second example in Section \ref{sec:5} shows that in general an immersion that is not locally polynomially convex may fail to be rationally convex. It is natural to ask whether local polynomial convexity is also guaranteed for immersions that are isotropic with respect to a "degenerate" K\"ahler form, as described in Theorem~\ref{th1}. This remains to be an open problem

In Section \ref{sec:5}, using a theorem of Weinstock \cite{4}, we show that there is a ``large" family of compact, totally real immersions in $\compx{2}$ with one transverse self-intersection, which are rationally convex but are not isotropic with respect to any K\"ahler form on $\compx{2}$, thus Gayet's theorem \cite{2} cannot be applied to this case. However, by Theorem~\ref{th1} they are isotropic with respect to a degenerate K\"ahler form.

We also remark that the main result in \cite{2} is implied by Theorem~\ref{th1}. Indeed, in the first step of the proof of the theorem in \cite{2} it is shown that $S$ being Lagrangian with respect to a K\"ahler form $\omega = dd^c \varphi$ implies that $S$ is isotropic with respect to a nondegenerate closed $(1,1)$-form defined on $\compx{n}$, that vanishes on sufficiently small neighborhoods of the singular points and is positive in the complement of some slightly larger neighborhoods. This is done by composing the original potential $\varphi$ with a suitable non-decreasing, convex function. We note that, prior to such composition, one can multiply $\varphi$ with a suitable cutoff function, and then apply the composition, this way controlling the shape of the neighborhoods mentioned above. It follows that the hypothesis of the main result in \cite{2} implies the hypothesis of the direction $(ii) \Rightarrow (i)$ in Theorem~\ref{th1}.

\bf{Acknowledgment.} I would like to thank my advisor, Rasul Shafikov, for his most valuable guidance, constant support and inspiring mentorship offered throughout the research process that led to the main result of this paper.


\section{Preliminaries} \label{sec:2}

In this section we review the basic background necessary for understanding the paper. Unless otherwise specified, by ``smooth" we shall mean $\mathcal{C}^\infty$-smooth and by a neighborhood of a compact connected subset $X \in \compx{n}$ we shall mean a connected open set containing $X$, having compact closure. Throughout the paper $B(p, r)$ denotes the open ball in $\compx{n}$ centered at $p \in \compx{n}$ and of radius $r>0$.


The \it{polynomially convex hull} of a compact subset $X \subset \compx{n}$ is defined as
\begin{equation*}
\widehat{X}:= \{z \in \compx{n} : \abs{P(z)} \leq \sup_{w \in X} \abs{P(w)}, \text{ for all holomorphic polynomials } P\},
\end{equation*}
and the \it{rationally convex hull} of $X$ as 
\begin{equation*}
\begin{split}
\rhull{X}:= &\{z \in \compx{n} : \abs{R(z)} \leq \sup_{w \in X} \abs{R(w)},\\& \text{ for all rational functions } R \text{ holomorphic on  } X\}.
\end{split}
\end{equation*}
We say that $X$ is \it{polynomially convex} if $X=\widehat{X}$ and \it{rationally convex} if $X = \rhull{X}$. It is immediate to see that if $X$ is polynomially convex then it is also rationally convex. $X$ is said to be \it{polynomially convex near $p \in X$} if for every sufficiently small $\ee>0$, the compact set $X \cap \close{B(p, \ee)}$ is polynomially convex. We say that $X$ is \it{locally polynomially convex} if $X$ is polynomially convex near all of its points. In most cases, it is difficult to verify whether a compact subset of $\compx{n}$ is rationally or polynomially convex. Thus, it is important to characterize these properties for regular subsets of $\compx{n}$, such as  embeddings and immersions, by using other means.

If $X$ is a submanifold of $\compx{n}$ and $p\in X$, we say that $X$ is \it{totally real at $p$} if the tangent space $T_pX$ does not contain any complex lines. $X$ is said to be \it{totally real} if it is totally real at every point. An immediate example of a totally real submanifold of the $n$-dimensional euclidean complex space is $\real{n} \subset \compx{n}$.

Let $\Omega$ be an open subset of $\compx{n}_{(z_1, \dots, z_n)}$, $z_j=x_j+iy_j$, $j=1, \dots, n$ and let $\varphi : \Omega \rightarrow \real{}$ be a real valued smooth function. As usual, we define
\begin{equation*}
\partial \varphi := \sum_{j=1}^n \frac{\partial \varphi}{\partial z_j} dz_j, \hrs{5} \close{\partial} \varphi := \sum_{j=1}^n \frac{\partial \varphi}{\partial \close{z}_j} d\close{z}_j.
\end{equation*}
It follows that the usual differential of $\varphi$ is given by $d \varphi=\partial \varphi + \close{\partial} \varphi$. We shall also need the $d^c$ differential operator, defined when acting on $\varphi$ as
\begin{equation*}
d^c \varphi :=  i(\close{\partial} \varphi - \partial \varphi),
\end{equation*}
or, in real coordinates,
\begin{equation} \label{eq:101}
d^c \varphi = \sum_{j=1}^n \left( \frac{\partial \varphi}{\partial x_j}dy_j -\frac{\partial \varphi}{\partial y_j}dx_j \right).
\end{equation}

The real valued smooth function $\varphi$ is \it{plursubharmonic} if the $(1,1)$-form $dd^c \varphi$ is nonnegative definite (or, using another common terminology, positive semidefinite). We say that $\varphi$ is \it{strictly plurisubharmonic} if $dd^c \varphi$ is positive definite. Recall that a \it{K\"ahler form}  on $\compx{n}$ is a nondegenerate closed form $\omega$ of bidegree $(1,1)$, which is positive definite. A smooth function $\varphi$ is called a \it{potential} for $\omega$ if $\omega = dd^c \varphi$. A real $m$-dimensional submanifold $S \subset \compx{n}$, $m \leq n$, is said to be \it{isotropic} with respect to a K\"ahler form $\omega$ if $\omega |_S =0$. If in the above case $m=n$ then we say that $S$ is \it{Lagrangian} with respect to $\omega$.

Let $F: D \subset \compx{n} \rightarrow \compx{n}$, $F = (u_1+iv_1, \dots, u_n+iv_n)$, be a smooth map, where $D$ is a domain in $\compx{n}$ and $u_j, v_j$ are smooth real-valued maps on $D$. For every $p \in D$ and $j \in \{1, \dots, n\}$ let 
\begin{equation*}
[d^c_pu_j]:= \left[-\frac{\partial u_j}{\partial y_1}\Bigg |_p  \hrs{5} \frac{\partial u_j}{\partial x_1}\Bigg |_p  \hrs{5} \dots  \hrs{5} -\frac{\partial u_j}{\partial y_n}\Bigg |_p  \hrs{5} \frac{\partial u_j}{\partial x_n}\Bigg |_p \right]
\end{equation*}
\begin{equation*}
[d^c_pv_j]:= \left[-\frac{\partial v_j}{\partial y_1}\Bigg |_p  \hrs{5} \frac{\partial v_j}{\partial x_1}\Bigg |_p  \hrs{5} \dots  \hrs{5} -\frac{\partial v_j}{\partial y_n}\Bigg |_p  \hrs{5} \frac{\partial v_j}{\partial x_n}\Bigg |_p \right]
\end{equation*}
be the $1 \times 2n$ matrices associated to the operator $d^c$ acting on $T_p\compx{n}$ and let
\begin{equation*}
[z]:=[x_1 \hrs{5} y_1  \hrs{5}\dots  \hrs{5} x_n  \hrs{5} y_n],
\end{equation*}
where $z=(z_1, \dots, z_n) \in T_p\compx{n}$, $z_j= x_j+iy_j$, $j=1, \dots, n$. Then, for all $z \in T_p\compx{n}$ we define
\begin{equation*}
d^c_p F(z) := [d^c_pF]\cdot [z]^T,
\end{equation*}
where $[d^c_pF]$ is the $2n\times 2n$ matrix with complex entries, given by
\begin{equation*}
[d^c_pF]:= \begin{bmatrix}
[d^c_p u_1]\\
[d^c_p v_1]\\
\dots\\
[d^c_p u_n]\\
[d^c_p v_n]
\end{bmatrix}.
\end{equation*}
The next technical result is required in the proof of Theorem~\ref{th1}.

\begin{lemma}\label{lem:1}
Let $D \subset \compx{n}$ be a domain, $F : D \rightarrow \compx{n}$, $F = (F_1, \dots, F_n)$, a smooth map such that $F(D)$ is a domain in $\compx{n}$ and $h : F(D) \rightarrow \real{}$ a smooth function. Then
\begin{equation*} \label{eq:1}
d^c_p(h \circ F) = d_{F(p)}h \circ d^c_pF,
\end{equation*}
at any point $p \in D$.
\end{lemma}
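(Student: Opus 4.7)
The plan is to verify the identity by a direct coordinate computation, starting from the real-coordinate formula \eqref{eq:101} for $d^c$ and applying the classical chain rule. I work with an arbitrary tangent vector $z \in T_p \compx{n}$ having real components $(x_1, y_1, \dots, x_n, y_n)$ and evaluate both sides of the claimed identity on $z$. First, applying \eqref{eq:101} to the scalar function $h \circ F : D \to \real{}$ gives
\[
d^c_p(h \circ F)(z) = \sum_{k=1}^n \left[ -\pdi{(h\circ F)}{y_k}\Big|_p \, x_k + \pdi{(h\circ F)}{x_k}\Big|_p \, y_k \right].
\]
I then expand each $\partial(h\circ F)/\partial x_k$ and $\partial(h\circ F)/\partial y_k$ by the standard real chain rule, picking up a double sum indexed by both the source coordinate $k$ and the target coordinate $j$. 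Swapping the order of summation and collecting the terms carrying $\partial h/\partial u_j$ and $\partial h/\partial v_j$ at $F(p)$, the inner $k$-sums become exactly the rows $[d^c_p u_j]\cdot [z]^T$ and $[d^c_p v_j] \cdot [z]^T$ appearing in the definition of $[d^c_p F]$.

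To recognize this regrouped expression as the right-hand side of the lemma, I write $d_{F(p)} h$ in the real basis $(\partial/\partial u_1, \partial/\partial v_1, \dots)$ of $T_{F(p)}\compx{n}$: for any vector $w$ with real components $(w^u_1, w^v_1, \dots, w^u_n, w^v_n)$,
\[
d_{F(p)} h(w) = \sum_{j=1}^n \left[ \pdi{h}{u_j}\Big|_{F(p)} w^u_j + \pdi{h}{v_j}\Big|_{F(p)} w^v_j \right].
\]
Applying this to $w = d^c_p F(z)$, whose real components are precisely $d^c_p u_j(z)$ and $d^c_p v_j(z)$ produced in the previous step, yields the same regrouped expression obtained above for the left-hand side, completing the verification.

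I do not anticipate a substantive obstacle: the lemma is a bookkeeping statement, and the only care required is to keep the indices and the evaluation points of the partial derivatives straight. A slicker, equivalent viewpoint would be to observe that \eqref{eq:101} amounts to $d^c_p \psi = -d_p\psi \circ J$ for the standard complex structure $J$ on $T_p\compx{n}$, from which the matrix definition of $[d^c_p F]$ reads $d^c_p F(z) = -d_p F(Jz)$ with $F$ viewed as a real map $\real{2n}\to\real{2n}$; the ordinary chain rule then gives the identity in a single line. I prefer to present the coordinate computation as the main proof to stay consistent with the matrix formalism set up immediately before the lemma.
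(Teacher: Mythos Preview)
Your proposal is correct; the coordinate computation goes through exactly as you sketch, and your alternative observation that $d^c_p\psi = -d_p\psi\circ J$ reduces the lemma to the ordinary real chain rule is also valid.

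The paper takes a different route: instead of working in real coordinates via \eqref{eq:101}, it invokes the complex chain rule for $\partial$ and $\bar\partial$, namely $\partial_p(h\circ F)=\partial_{F(p)}h\circ\partial_pF+\bar\partial_{F(p)}h\circ\bar\partial_pF$ and its conjugate, then forms $d^c_p(h\circ F)=i[\bar\partial_p(h\circ F)-\partial_p(h\circ F)]$ and regroups to obtain $(\partial_{F(p)}h+\bar\partial_{F(p)}h)\circ d^c_pF=d_{F(p)}h\circ d^c_pF$. The paper's argument is shorter and coordinate-free but presupposes the complex chain rule as a black box; your argument is more elementary, requiring only the real chain rule, and has the merit of staying faithful to the explicit matrix formalism for $[d^c_pF]$ introduced just before the lemma, which the paper's proof does not actually engage with.
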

\begin{proof}
Let $p \in D$. By the complex chain rule (see for example \cite[p.6]{3}),
\begin{equation*}\label{eq:2}
\partial_p(h \circ F) = \partial_{F(p)}h \circ \partial_pF + \bar{\partial}_{F(p)}h \circ \bar{\partial}_pF,
\end{equation*}
\begin{equation*}\label{eq:3}
\bar{\partial}_p(h \circ F) = \bar{\partial}_{F(p)}h \circ \partial_pF + \partial_{F(p)}h \circ \bar{\partial}_pF.
\end{equation*}
So,
\begin{equation*}\label{eq:4}
\begin{split}
d^c_p(h \circ F) &= i\left[ \bar{\partial_p}(h\circ F) - \partial_p{(h \circ F)} \right]\\& =i\left[ \bar{\partial}_{F(p)}h \circ \partial_pF + \partial_{F(p)}h \circ \bar{\partial}_pF - \partial_{F(p)}h \circ \partial_pF - \bar{\partial}_{F(p)}h \circ \bar{\partial}_pF \right]\\& =  \bar{\partial}_{F(p)}h \circ[ -i(\partial_p F- \bar{\partial}_pF)]  + \partial_{F(p)}h\circ[i(\bar{\partial}_pF -\partial_pF)] \\ &= (\bar{\partial}_{F(p)}h + \partial_{F(p)}h)\circ d^c_pF\\ &=  d_{F(p)}h \circ d^c_pF.
\end{split}
\end{equation*}
\end{proof}

One other important tool we will make use of is the standard Euclidean \it{distance function} defined for a subset $M \subset \compx{n}$ as
\begin{equation*}
dist(z, M) = \inf\{dist(z, p) : p \in M\}
\end{equation*}
for all $z \in \compx{n}$. It is a well known fact that, if $M$ is a smooth totally real submanifold of $\compx{n},$ the square distance function, $dist^2(\cdot, M)$, is smooth and strictly plurisubharmonic on a sufficiently small neighborhood of $M$.

For the rest of the paper, we shall commit a mild abuse of notation and keep the notation $S$ for the image in $\compx{n}$ of the given manifold $S$ via the immersion..

\section{The Necessary Condition for the Rational Convexity of $S$} \label{sec:3}

In this section we prove that $(i)$ implies $(ii)$ in Theorem~\ref{th1}. As we already mentioned in the introduction, in this case we do not require the totally real immersion to be locally polynomially convex. In fact, we shall prove the following more general result, where the rational convexity of $S$ implies the existence of a family of of degenerate K\"ahler forms with respect to which it is isotropic.

\begin{proposition} \label{prp:1}
Let $S$ be the immersion defined in Theorem~\ref{th1} without assuming that it is locally polynomially convex. If $S$ is rationally convex then for every sufficiently small $\ee >0$ there exist contractible neighborhoods $W_\ee^j$ of $p_j$ in $S$, $j=1, \dots, N$, such that for every neighborhood $\Omega$ of $S$ there exist neighborhoods $U_\ee^j \subset V_{\ee}^j \Subset B(p_j, \ee) \cap \Omega$ of $p_j$, $j = 1, \dots, N$ and a smooth plurisubharmonic function $\varphi_{\ee} : \compx{n} \rightarrow \real{}$ such that $U_{\ee}^j \cap S = W_\ee^j$ for all $j$, $dd^c \varphi_{\ee} =0$ on $\cup_{j=1}^N \close{U_{\ee}^j}$, $\varphi_\ee$ is strictly plurisubharmonic on $\compx{n} \setminus \bigcup_{j=1}^N\close{V_{\ee}^j}$ and $\iota^* dd^c \varphi_{\ee} = 0$.
\end{proposition}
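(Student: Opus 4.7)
The proof adapts the potential-theoretic approach of Duval--Sibony \cite{1} to handle the finitely many self-intersection points of $S$. The plan is threefold: first, exploit rational convexity to construct a smooth plurisubharmonic function $\psi$ on $\mathbb{C}^n$ whose Monge--Amp\`ere mass is concentrated on complex hypersurfaces avoiding $S$, so that $S$ is automatically $\psi$-isotropic; second, modify $\psi$ locally near each $p_j$, using contractibility of neighborhoods in $S$, to force exact pluriharmonicity on a prescribed small neighborhood of the singular set; third, compose with a convex function to recover strict plurisubharmonicity elsewhere.

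\textbf{Step 1: Global potential with isotropy.} Rational convexity of $S$ means that for every $z \in \mathbb{C}^n \setminus S$ there is a holomorphic polynomial $P_z$ with $P_z(z) = 0$ and $P_z \neq 0$ on $S$. A standard countable selection of such $P_\alpha$ combined with smoothing terms $\log(|P_\alpha|^2 + \delta_\alpha^2)$ with carefully tuned weights yields a smooth PSH function $\psi$ on $\mathbb{C}^n$ whose $dd^c$ is supported, up to small error, near $\bigcup_\alpha \{P_\alpha = 0\}$, which by construction avoids $S$. Tuning the weights and the $\delta_\alpha$ makes $dd^c \psi$ strictly positive off a preassigned neighborhood of $S$, while the concentration of mass away from $S$ renders $\iota^* dd^c \psi$ arbitrarily small; a final $\bar\partial$-correction, as in \cite{1}, makes the isotropy $\iota^* dd^c \psi = 0$ hold exactly on the smooth locus of $S$.

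\textbf{Step 2: Local modification at the singular points.} For small $\epsilon > 0$, define $W_\epsilon^j \subset B(p_j, \epsilon/2) \cap S$ as the union of small disks on each local branch at $p_j$; this is contractible because near $p_j$ the immersion $\iota$ consists of finitely many smooth sheets through $p_j$. Given a neighborhood $\Omega$ of $S$, choose $U_\epsilon^j \Subset V_\epsilon^j \Subset B(p_j,\epsilon) \cap \Omega$ with the $\overline{V_\epsilon^j}$ pairwise disjoint and $U_\epsilon^j \cap S = W_\epsilon^j$. Using contractibility of $W_\epsilon^j$ together with $\iota^* dd^c \psi = 0$, a Poincar\'e-type argument produces a smooth function $h_j$ defined near $\overline{V_\epsilon^j}$ such that $\psi - h_j$ is pluriharmonic on $U_\epsilon^j$. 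Via cutoffs $\chi_j$ supported in $V_\epsilon^j$ and equal to $1$ on $U_\epsilon^j$, set $\tilde\varphi := \psi - \sum_j \chi_j h_j$, for which $dd^c \tilde\varphi = 0$ on $\bigcup_j \overline{U_\epsilon^j}$.

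\textbf{Finalization and main obstacle.} The cutoff procedure generally destroys plurisubharmonicity on the transition regions $V_\epsilon^j \setminus U_\epsilon^j$; to repair it, compose with a sufficiently convex increasing function $F$, as in \cite{2}, and set $\varphi_\epsilon := F \circ \tilde\varphi$. The identity $dd^c(F \circ \tilde\varphi) = F'(\tilde\varphi)\,dd^c\tilde\varphi + F''(\tilde\varphi)\,d\tilde\varphi \wedge d^c\tilde\varphi$ shows that taking $F''$ large dominates the negative contribution from the cutoffs and restores strict plurisubharmonicity off $\bigcup_j \overline{V_\epsilon^j}$, while preserving $dd^c = 0$ on $\bigcup_j \overline{U_\epsilon^j}$ (since $F'$ simply rescales the vanishing $dd^c\tilde\varphi$ there) and the isotropy on $S$ (since $d\tilde\varphi$ vanishes along $S$, as $\tilde\varphi$ attains its minimum there, killing the second term). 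The main obstacle is Step~1, namely producing \emph{exact} (not merely approximate) isotropy from a PSH potential built from the rational convexity data; this is precisely where Duval--Sibony's $L^2$-correction technique is indispensable. Crucially, that technique is insensitive to the singular structure of $S$, since rational convexity itself treats the $p_j$ like any other points; the singularities enter only in Step~2, where contractibility of $W_\epsilon^j$ eliminates the cohomological obstruction to finding the local primitives $h_j$. Note that local polynomial convexity plays no role in this direction, which is what permits the stronger formulation of Proposition~\ref{prp:1}.
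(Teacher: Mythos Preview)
Your Step~3 does not work: the term $F''(\tilde\varphi)\,d\tilde\varphi\wedge d^c\tilde\varphi$ is a semipositive $(1,1)$-form of rank at most one, so however large you take $F''$ it adds positivity only in the single complex direction of $\partial\tilde\varphi$ and cannot dominate a full-rank negative contribution from $F'(\tilde\varphi)\,dd^c\tilde\varphi$ in the transition annuli $V_\epsilon^j\setminus U_\epsilon^j$. Convex composition \emph{preserves} plurisubharmonicity; it does not repair it. There is a second gap one step earlier: after the cutoff in Step~2 the isotropy $\iota^* dd^c\tilde\varphi=0$ is already lost on $S\cap(V_\epsilon^j\setminus U_\epsilon^j)$, since $\iota^* dd^c(\chi_j h_j)$ has no reason to vanish there; your finalization only argues that the $F''$-term pulls back to zero (via the unjustified claim that $\tilde\varphi$ attains its minimum on $S$), not that $\iota^* dd^c\tilde\varphi$ itself does. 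Incidentally, in Step~2 the existence of $h_j$ has nothing to do with contractibility of $W_\epsilon^j\subset S$ or with $\iota^*dd^c\psi=0$: what is actually needed is the local $dd^c$-lemma on the ball $U_\epsilon^j\subset\mathbb{C}^n$, which holds regardless.

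The paper sidesteps all of this by aiming higher in your Step~1. Rather than produce a $\psi$ with merely $\iota^*dd^c\psi=0$, it applies Duval--Sibony \cite[Theorem~2.1]{1} to the rationally convex compact $\rhull{S_\delta}$ (for small $\delta>0$; Lemma~\ref{lem:40} arranges $\rhull{S_\delta}\Subset\Omega$) to obtain a smooth plurisubharmonic $\psi_\delta$ on $\mathbb{C}^n$ that is strictly plurisubharmonic off $\rhull{S_\delta}$ and has $dd^c\psi_\delta=0$ on all of $\rhull{S_\delta}$, hence on an \emph{open neighbourhood} of $S$. Pluriharmonicity near the $p_j$ and isotropy on $S$ then come for free, with no cutoff surgery. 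The remaining task is to gain strict plurisubharmonicity on that neighbourhood away from small balls around the $p_j$ while keeping isotropy: the paper does this by adding $C\chi\rho_\epsilon$, where $\rho_\epsilon$ is built from $\mathrm{dist}^2(\cdot,S)$, modified near each $p_j$ on each smooth sheet separately so that it vanishes on a small $V_\epsilon^j$, remains plurisubharmonic, and still satisfies $\iota^*dd^c\rho_\epsilon=0$ (Lemmas~\ref{lem:10}--\ref{lem:3}). The sum $\psi_\delta+C\chi\rho_\epsilon$ has all the required properties without any convex composition.
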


A consequence of the above proposition is the following corollary which may be useful in applications. Its proof is given at the end of this section.
 
\begin{corollary} \label{cor:1}
If $S$ is rationally convex then for all integers $k \geq 2$ there exists a $C^k$-smooth plurisubharmonic function $\varphi_0 : \compx{n} \rightarrow \real{}$ which is strictly plurisubharmonic on $\compx{n} \setminus \{p_1, \dots, p_N\}$ and such that $\iota^* dd^c \varphi_0 = 0$.
\end{corollary}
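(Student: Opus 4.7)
The plan is to construct $\varphi_0$ as a weighted infinite series of the functions furnished by Proposition~\ref{prp:1} for a sequence of shrinking parameters. Fix any neighborhood $\Omega$ of $S$ once and for all, choose a sequence $\varepsilon_\nu \searrow 0$ with $\varepsilon_1$ small enough for Proposition~\ref{prp:1} to apply, and denote by $\varphi_\nu := \varphi_{\varepsilon_\nu} \in C^\infty(\compx{n})$ and $V_\nu^j \Subset B(p_j, \varepsilon_\nu)$ the plurisubharmonic function and the neighborhoods supplied by the proposition. Each $\varphi_\nu$ is plurisubharmonic on $\compx{n}$, strictly plurisubharmonic on $\compx{n} \setminus \bigcup_{j=1}^N \close{V_\nu^j}$, and satisfies $\iota^* dd^c \varphi_\nu = 0$.

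Next, fix a compact exhaustion $\{K_\nu\}_{\nu \geq 1}$ of $\compx{n}$ and set
\[
c_\nu := \frac{1}{2^\nu \bigl( 1 + \| \varphi_\nu \|_{C^k(K_\nu)} \bigr)} > 0.
\]
For any compact $K \subset \compx{n}$ one has $K \subset K_{\nu_0}$ for some $\nu_0$, and hence
\[
\sum_{\nu \geq \nu_0} c_\nu \| \varphi_\nu \|_{C^k(K)} \;\leq\; \sum_{\nu \geq \nu_0} 2^{-\nu} \;<\; \infty.
\]
Thus the series $\varphi_0 := \sum_{\nu=1}^\infty c_\nu \varphi_\nu$ converges in $C^k_{\mathrm{loc}}(\compx{n})$ to a $C^k$-smooth function, and term-by-term differentiation (valid because $k \geq 2$) yields the identity $dd^c \varphi_0 = \sum_\nu c_\nu dd^c \varphi_\nu$ as $C^{k-2}$-forms on $\compx{n}$.

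The three asserted properties of $\varphi_0$ then follow quickly. Plurisubharmonicity: the identity above exhibits $dd^c \varphi_0$ as a convergent sum of nonnegative $(1,1)$-forms with positive coefficients, so it is nonnegative. Strict plurisubharmonicity at $z \in \compx{n} \setminus \{p_1, \dots, p_N\}$: pick $\nu_*$ with $\varepsilon_{\nu_*} < \min_j |z - p_j|$; then $z \notin \close{V_{\nu_*}^j}$ for every $j$, so $dd^c \varphi_{\nu_*}$ is positive definite at $z$, making the contribution $c_{\nu_*} dd^c \varphi_{\nu_*}(z)$ strictly positive while every other term remains nonnegative. Vanishing pullback: $\iota^* dd^c \varphi_0 = \sum_\nu c_\nu \iota^* dd^c \varphi_\nu = 0$ by linearity of pullback and the hypothesis on each $\varphi_\nu$.

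The only genuinely technical point is the calibration of the coefficients $c_\nu$ so that the infinite series converges in $C^k$ on compact subsets of the noncompact space $\compx{n}$; once this is arranged, every property of $\varphi_0$ reduces to an elementary verification built on the corresponding property of the individual $\varphi_\nu$ produced by Proposition~\ref{prp:1}.
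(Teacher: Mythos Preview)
Your proof is correct and follows essentially the same strategy as the paper: both construct $\varphi_0$ as a weighted series $\sum c_\nu \varphi_{\varepsilon_\nu}$ of the functions from Proposition~\ref{prp:1} with $\varepsilon_\nu \searrow 0$, choosing the coefficients small enough to force $C^k$ convergence and then reading off the three properties termwise. The only minor difference is that you handle the noncompact domain via a compact exhaustion and diagonal-type weights, whereas the paper restricts attention to a fixed closed ball $B \supset \cup_j \Omega_{\varepsilon_j}$ and argues separately on $\compx{n}\setminus B$ using the specific structure of the $\varphi_\varepsilon$; your treatment is arguably cleaner but the underlying idea is the same.
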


Fix $j \in \{1, \dots, N\}$ and suppose that $p_j$ is a point where $S$ self-intersects $l$ times. For a sufficiently small $r>0$, the set $S \cap B(p_j,r)$ is the union of $l$ compact smooth submanifolds with boundary, $S_1, \dots,  S_l \subset S$, such that $S_k \cap S_m = \{p_j\}$, $k \neq m$. We say that $S_1, \dots, S_l$ are \it{smooth components of $S$ at $p_j$}. The proof of Proposition~\ref{prp:1} relies on the construction of a suitable function defined on a neighborhood of each smooth component of $S$ at each singular point and on the patching of all such functions into one that has the required properties (as per Lemma~\ref{lem:2}). The following two lemmas (\ref{lem:10} and \ref{lem:2}) will be applied separately to each smooth component of $S$ at each singular point. More generally, we prove the lemmas for a smooth, totally real submanifold $M$ of $\compx{n}$, with or without boundary.

\begin{lemma} \label{lem:10}
For every point $p \in M$ there exists a smooth function $\td{f} : \compx{n} \rightarrow \real{}$, with compact support, $p\in supp (\td{f})$, such that $\td{f}$ has a local minimum at $p$ and satisfies $\iota^* d^c \td{f} = 0$, where $\iota : M \rightarrow \compx{n}$ is the inclusion map. 
\end{lemma}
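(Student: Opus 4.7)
The plan is to take $\td{f}$ to be a cutoff of the squared Euclidean distance function to a suitable local boundaryless extension of $M$. Concretely, I would first replace $M$ near $p$ by a slightly larger smooth totally real submanifold $\td{M}$ without boundary, containing $M \cap U_0$ for some neighborhood $U_0$ of $p$ in $\compx{n}$; this is possible by extending a smooth manifold-with-boundary across its boundary and appealing to the openness of the totally real condition on tangent planes. The standard fact recalled at the end of Section~\ref{sec:2} then furnishes a neighborhood $U \subset U_0$ of $p$ on which $f(z) := \mathrm{dist}^2(z, \td{M})$ is smooth. I would pick a smooth cutoff function $\chi : \compx{n} \rightarrow [0, 1]$ with compact support in $U$ and with $\chi \equiv 1$ on a smaller neighborhood $U'$ of $p$, and set $\td{f} := \chi f$, extended by zero outside $U$.

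The key observation is that $f \geq 0$ with equality on $\td{M}$, so at every $q \in M \cap U \subset \td{M}$ the function $f$ attains its global minimum value $0$, forcing $df|_q = 0$. In particular every real partial derivative of $f$ vanishes at $q$, so by the coordinate expression \eqref{eq:101} the real $1$-form $d^c f$ also vanishes at $q$. Combined with $f(q) = 0$, the product rule gives
\[
d^c_q \td{f} \;=\; \chi(q)\, d^c_q f \;+\; f(q)\, d^c_q \chi \;=\; 0
\]
for every $q \in M \cap U$. For $q \in M$ outside $U$, $\td{f}$ is identically zero in a neighborhood of $q$, so $d^c_q \td{f} = 0$ there as well. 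Hence $\iota^* d^c \td{f} = 0$ on all of $M$.

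The remaining requirements are immediate. By construction $\td{f}$ is smooth on $\compx{n}$, compactly supported in $\supp{\chi} \subset U$, and satisfies $\td{f} \geq 0$ with $\td{f}(p) = 0$, so $p$ is a (non-strict) local minimum. Because $\chi \equiv 1$ on $U'$ and $f$ is strictly positive on $U' \setminus \td{M}$, every neighborhood of $p$ contains points at which $\td{f} > 0$, so $p \in \supp{\td{f}}$.

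The only nontrivial ingredient is the smooth totally real extension of $M$ past its boundary, which matters only when $p \in \partial M$; for interior points one can take $\td{M} = M \cap U_0$ directly and the entire argument collapses to a routine computation. I expect no serious obstacle beyond this small technicality, since the whole content of the lemma boils down to the elementary remark that a squared distance function vanishes to second order on the set it measures distance to.
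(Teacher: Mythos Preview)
Your argument is correct, and it proves Lemma~\ref{lem:10} exactly as stated. The crux---that $\mathrm{dist}^2(\cdot,\td{M})$ vanishes to second order along $\td{M}$, so both $df$ and $d^cf$ vanish pointwise there---is sound, and the product rule $d^c(\chi f)=\chi\,d^cf+f\,d^c\chi$ handles the cutoff cleanly. You in fact obtain the stronger conclusion that $d^c\td{f}$ vanishes as a $1$-form on $\compx{n}$ at every point of $M$, not merely after pullback.

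Your route is genuinely different from the paper's and considerably shorter. The paper instead foliates a neighborhood of $p$ by the complex-normal leaves $JT_qM$, takes an arbitrary smooth function $f$ on $M$ with a \emph{strict} local minimum at $p$, and extends it to $\td{f}$ by making it constant along each leaf; the vanishing of $\iota^*d^c\td{f}$ is then verified via the $d^c$-chain rule (Lemma~\ref{lem:1}) and a coordinate computation (Claims~A and~B). What this buys the paper is freedom in prescribing $\td{f}\big|_M$: in the proof of Lemma~\ref{lem:2} they need $\td{g}=\mathrm{dist}^2(\cdot,M)+C\td{f}$ to have a \emph{strict} local minimum at $p$, which follows because their $\td{f}$ is strictly positive on $M\setminus\{p\}$ near $p$. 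Your $\td{f}$ vanishes identically on $M$ near $p$, so while it settles Lemma~\ref{lem:10} as written (the remark following the statement explicitly allows a non-strict minimum), it could not be plugged directly into the downstream argument without modification.
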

\begin{remark}
Note that the lemma does not require $p$ to be a \it{strict} local minimum point for $f$.
\end{remark}
\begin{proof}[Proof of Lemma~\ref{lem:10}.]
Suppose first that $\dim_{\real{}}M = n$. For each $q \in M$, there exists a global complex-affine change of coordinates, \bf{which depends on $q$}, 
\begin{equation*}\label{41}
\Phi : \compx{n}_{ z=(z_1, \dots, z_n) }\rightarrow \compx{n}_{ w=(w_1, \dots, w_n) },
\end{equation*}
where $z_j=x_j+iy_j, w_j=u_j+iv_j$, $x_j, y_j, u_j, v_j \in \real{}, \forall j=1 \dots n$, such that $\Phi(q)=0$ and $T_0M' = \real{n}_{u := (u_1, \dots, u_n)}$, with $M'=\Phi(M)$. Suppose that $\Phi(z) = A(z-q)$, where $A$ is a complex $n\times n$ invertible matrix (as a complex-affine map, we can always represent $\Phi$ like this). Let $J :=  \begin{bmatrix}
0&-I_n\\
I_n&0
\end{bmatrix}$ be the matrix that gives the standard complex structure of $\compx{n}$, corresponding to multiplication by $i$. Denote 
\begin{equation*}
JT_qM := \{ (i(z-q)+q \hrs{2} | \hrs{2} z \in T_qM\}.
\end{equation*}

\noindent \bf{Claim A.} $\Phi(JT_qM) = \real{n}_v \subset \compx{n}_w$, for all $q \in M$.
\begin{proof}[Proof of Claim A]
Let $w \in JT_qM$, hence there exists $z \in T_qM$ such that $w = i(z-q)+q$. Then, 
\begin{equation*}\label{eq:5}
\begin{split}
\Phi(w) &= \Phi(i(z - q) + q)\\ &=A \left[i(z - q) + q -q  \right]\\ &= iA(z - q) = i\Phi(z) \in JT_0M' = \real{n}_v.
\end{split}
\end{equation*}
The converse inclusion follows similarly.
\end{proof}

Let $p \in M$ be arbitrarily fixed. Since $M$ is totally real, there exists a (small enough) neighborhood $U$ of $p$ such that 
\begin{equation*}\label{eq:6}
\mathcal{F}_U:=\{JT_qM \cap U \hrs{2}| \hrs{2} q \in M \cap U\}
\end{equation*}
is a foliation of $U$. 

Let $V$ be a neighborhood of $p$ in $M$ such that $V \Subset M\cap U$ and let $f:M \rightarrow \real{}$ be a smooth nonnegative function such that $supp \hrs{2} f = \close V$, with a strict local minimum, equal to $0$, at $p$. Define $\td{f} : U \rightarrow \real{}$ as
\begin{equation*}\label{61}
\td{f}(z) = f(q), \text{ for each } z \in U \cap JT_qM, q \in M \cap U,
\end{equation*}
which is well defined, since $\mathcal{F}_U$ is a foliation of $U$ and, shrinking $U$ if necessary, for each $q \in M \cap U$ we have $M \cap U\cap JT_qM = \{q\}$.
Multiply $\td{f}$ with a suitable smooth cut-off function to obtain a new $\td{f}$ (maintaining the same notation) which is defined on the entire $\compx{n}$ and satisfies
\begin{equation*}\label{eq:7}
\td{f}(z)=
\begin{cases}
f(q), \hrs{5} &\forall z \in JT_qM \cap U', q\in V,\\
0, &\forall z \in \compx{n}\setminus U'',
\end{cases}
\end{equation*}
where $U' \subset U'' \Subset U$ and $U' \cap M =V$.
Note that $\td{f}$ is nonnegative, smooth, with compact support and with $0$ as a non-strict local minimum value at $p$.

For every $q \in V$, define
\begin{equation*}\label{eq:8}
h:= \td{f} \circ \Phi^{-1},
\end{equation*}
where, again, $\Phi$ (and therefore $h$) depends on the choice of $q$. Then, $h$ is a smooth real-valued function satisfying
\begin{equation*}\label{eq:9}
h(w) = h(0) = f(q), \hrs{5} \forall w\in W\cap J\real{n}_u = W\cap \real{n}_{v=(v_1, \dots, v_n)},
\end{equation*}
for some neighborhood $W$ of the origin $0 \in \compx{n}_w$. Hence, $h$ is constant in $W \cap \real{n}_v$, which means that
\begin{equation}\label{eq:10}
\frac{\partial h}{\partial v_j}(0) = 0,
\end{equation}
for all $j=1 \dots n$. By (\ref{eq:101}) we have
\begin{equation*}\label{eq:11}
d^c = \sum_{j=1}^n \left( \frac{\partial}{\partial u_j}dv_j -\frac{\partial}{\partial v_j}du_j \right),
\end{equation*}
and by (\ref{eq:10}) and the fact that $dv_j =0$ on $\real{n}_u$ we get
\begin{equation}\label{eq:12}
j^* d_0^c h =0,
\end{equation}
where $j:\real{n}_u \rightarrow \compx{n}$ is the inclusion map.

\noindent \bf{Claim B.} If $j^* d_0^c h =0$ then $\iota^*d_q^c\td{f} =0$.

\begin{proof}[Proof of Claim B]
By Lemma~\ref{lem:1}, 
\begin{equation} \label{eq:14}
d_0^ch = d^c_0(\td{f}\circ\Phi^{-1}) = d_{\Phi^{-1}(0)}\td{f} \circ d^c_0 \Phi^{-1} = d_q\td{f} \circ d^c_0 \Phi^{-1}.
\end{equation}
Let $\nu$ be a tangent vector in $T_qM$. Then, $\displaystyle d^c_q\td{f}(\nu) = i\bar{\partial}_q\td{f}(\nu) - i\partial_q\td{f}(\nu) = \bar{\partial}_q\td{f}(-i\nu) + \partial_q\td{f}(-i\nu) = d_q\td{f}(-i\nu)$, where we used the complex anti-linearity of $\bar{\partial}$ and the complex linearity of $\partial$.
Because $\Phi$ is (bi)holomorphic, we have $\displaystyle d^c_0\Phi^{-1} = -id_0\Phi^{-1}$. Since $d_0\Phi^{-1}$ is a vector space isomorphism, there exists $\xi \in T_0M' = \real{n}_u$ such that $\nu = d_0\Phi^{-1}(\xi)$, hence $-i\nu = -id_0\Phi^{-1}(\xi) = d^c_0\Phi^{-1}(\xi)$. Since $\nu$ was arbitrarily fixed in $T_qM$ and by (\ref{eq:14}), the claim follows.
\end{proof}

By Claim B and by (\ref{eq:12}) it follows that $\iota^* d^c_q\td{f} =0$ and, since $q$ was arbitrarily fixed in $V$ and by the fact that on $\compx{n} \setminus U''$ we have $\td{f}\equiv 0$, we conclude that
\begin{equation}\label{eq:15}
\iota^*d^c\td{f} = 0.
\end{equation}

If $\dim_{\real{}}M < n$ and $p \in M$, there exists $U$, a neighborhood of $p$ in $\compx{n}$, such that $M \cap U$ is included in a compact, totally real submanifold $\td{M}$ of $\compx{n}$ of real dimension $n$. By what we proved already, there exists a smooth function $\td{f}:\td{M} \rightarrow \real{}$, with compact support, which can be chosen such that $supp (f) \subset U$ and with a local minimum at $p$, such that $\td{\iota}^*d^c\td{f} = 0$, where $\td{\iota}:\td{M} \rightarrow \compx{n}$ is the inclusion map. Then, the restriction $\td{f} \big|_M$ satisfies the same properties for $M$. This completes the proof of Lemma~\ref{lem:10}.
\end{proof}

\begin{lemma}\label{lem:2}
For every point $p \in M$ and every sufficiently small $\ee>0$, there exist a neighborhood $\Omega_\ee$ of $M$, a smooth nonnegative function $g_\ee : \Omega_\ee \rightarrow \real{}$ and neighborhoods $V_\ee , W_p \subset \Omega_\ee$ of $p$ such that
\begin{enumerate}[$(i)$]
\item $V_\ee \Subset B(p,\ee) \Subset W_p$;
\item $g_\ee\equiv 0$ in $V_\ee$;
\item $g_\ee$ is plurisubharmonic in $\Omega_\ee$ and strictly plurisubharmonic in $\Omega_\ee \setminus \close{V}_\ee$;
\item $g_\ee = C_\ee \cdot dist^2(\cdot, M)$ in $\Omega_\ee \setminus W_p$, for some constant $C_\ee>0$;
\item $\iota^*d^c g_\ee = 0$.
\end{enumerate}
\begin{remark}
The neighborhood $W_p$ depends only on $p$, not on $\ee$, therefore the notation.
\end{remark}
\end{lemma}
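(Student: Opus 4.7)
The plan is to construct $g_\ee$ as a cut-off modification of the squared distance function. Set $\rho(z) := \mathrm{dist}^2(z, M)$, which by standard theory is smooth and strictly plurisubharmonic on some tubular neighborhood of $M$. Since $\rho \geq 0$ attains its minimum on $M$, one has $d\rho|_M = 0$, and hence $\iota^* d^c\rho = 0$ automatically. Fix once and for all an open neighborhood $W_p \Supset B(p,\ee)$ depending only on $p$, and for each sufficiently small $\ee > 0$ choose a smooth cut-off $\chi : \compx{n} \to [0,1]$ with $\chi \equiv 0$ on a ball $V_\ee \Subset B(p,\ee)$ centered at $p$ and $\chi \equiv 1$ outside $W_p$. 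Try the ansatz $g_\ee := C_\ee\, \rho\, \chi$, with $C_\ee > 0$ to be specified.

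Properties (i), (ii), and (iv) are immediate from this formula. For (v), since $g_\ee \geq 0$ and $g_\ee|_M \equiv 0$, $M$ is a minimum set of $g_\ee$, so $dg_\ee$ vanishes on $M$ as a 1-form on $\compx{n}$ and hence $\iota^* d^c g_\ee = 0$. The main work is (iii). The product rule gives
\[
dd^c g_\ee = C_\ee\big[\chi\, dd^c\rho + \rho\, dd^c\chi + d\rho \wedge d^c\chi + d\chi \wedge d^c\rho\big],
\]
which vanishes on $V_\ee$, equals $C_\ee dd^c\rho > 0$ outside $W_p$, and reduces on $M \cap (W_p \setminus \close{V}_\ee)$ to the strictly positive term $C_\ee \chi\, dd^c\rho$ (since $\rho$ and $d\rho$ vanish on $M$). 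For points off $M$ in the transition region, Cauchy-Schwarz on the Levi form yields
\[
dd^c g_\ee \geq C_\ee\big[\chi(z)\, c_0 - O(\rho(z)) - O\big(|\nabla\rho(z)|\,|\nabla\chi(z)|\big)\big]\, \omega_0,
\]
with $c_0 > 0$ the strict PSH constant of $\rho$ and $\omega_0$ the Euclidean Kähler form; the bracket is positive provided the transverse thickness of $\Omega_\ee$ at each $q \in M$ is controlled by a small multiple of $\chi(q)$.

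I expect the main obstacle to be securing strict plurisubharmonicity just outside $\close{V}_\ee$ in directions transverse to $M$, where $\chi$ (being $C^\infty$ and flat on $\close{V}_\ee$) vanishes to infinite order while $|\nabla\rho|$ need not be small. A partial remedy is to take $\Omega_\ee$ as a tapered tubular neighborhood of $M$ which narrows as one approaches $\partial V_\ee \cap M$, together with a thin open shell around $\close{V}_\ee$ to keep $\Omega_\ee$ an honest neighborhood of all of $M$. If the multiplicative ansatz is insufficient, the fallback is to replace $\chi$ by a convex composition $\chi \circ u$ where $u(z) := C\rho(z) + K(|z-p|^2 - r^2)$ is globally strictly plurisubharmonic and $\chi : \real{} \to [0,\infty)$ is smooth convex nondecreasing vanishing on $(-\infty, 0]$; this produces a smooth PSH function that vanishes on the tapered set $\{u \leq 0\}$ (contained in $V_\ee$ for $K$ large) and is strictly plurisubharmonic where $u > 0$, at the cost of an interpolation step on an annular zone inside $W_p$ to match $C_\ee\rho$ outside $W_p$.
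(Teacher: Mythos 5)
Your multiplicative ansatz $g_\ee = C_\ee\,\rho\,\chi$ fails for the reason you correctly identify, but your fallback has a genuine gap: it does not achieve property $(v)$. Setting $u(z) = C\rho(z) + K\bigl(|z-p|^2 - r^2\bigr)$ gives $\iota^*d^c u = K\,\iota^*d^c|z-p|^2$, and $\iota^*d^c|z-p|^2$ does \emph{not} vanish on a general totally real $M$. Indeed $dd^c|z-p|^2$ is a positive multiple of the standard K\"ahler form, so even the weaker identity $\iota^*dd^c|z-p|^2=0$ holds precisely when $M$ is isotropic, which the lemma does not assume; for instance on a totally real plane $M=(A+iI)\real{2}\subset\compx{2}$ with generic $A\in\mathcal{M}_{2\times2}(\real{})$, the pullback $\iota^*d^c|z|^2$ is a nonclosed $1$-form. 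Moreover, your argument for $(v)$ in the primary case (that $g_\ee\ge 0$ with $g_\ee|_M\equiv 0$ forces $dg_\ee|_M=0$ and hence $\iota^*d^cg_\ee=0$) is correct but does not transfer to $\chi\circ u$, since $\chi\circ u$ vanishes on $M$ only in $\{u\le 0\}\cap M$ and is strictly positive on the rest of $M$, so $M$ is not a minimum set of it. Any later interpolation to match $C_\ee\rho$ outside $W_p$ inherits this defect near $\partial\{u\le 0\}$.

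The missing ingredient is exactly what Lemma~\ref{lem:10} provides: in place of $|z-p|^2$ one must use a compactly supported nonnegative function $\td f$ which is \emph{constant along the leaves} $JT_qM$ of a local foliation of a neighborhood of $p$, so that $\iota^*d^c\td f=0$ by construction, while $\td f|_M$ has a strict local minimum at $p$. Then $\td g:=\mathrm{dist}^2(\cdot,M)+C\td f$ is strictly plurisubharmonic for small $C>0$, still has $\iota^*d^c\td g=0$, and has an isolated strict minimum at $p$. Composing with a convex nondecreasing $\sigma_\ee$ vanishing on $[0,a_\ee]$ flattens the sublevel set $\{\td g\le a_\ee\}$ and preserves $(v)$ because $\iota^*d^c(\sigma_\ee\circ\td g)=\sigma_\ee'(\td g)\,\iota^*d^c\td g=0$; adding a small multiple of a cutoff times $\mathrm{dist}^2(\cdot,M)$ then restores strict plurisubharmonicity away from $\close{V}_\ee$ and yields $(iv)$. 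Your fallback has the right overall shape (convex reparametrization of a strictly psh function whose sublevel set isolates $p$), but the specific choice $|z-p|^2-r^2$ is incompatible with $(v)$; producing a replacement that simultaneously isolates $p$ in $M$ and has $\iota^*d^c=0$ is the actual content of the lemma.
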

\begin{proof}[Proof of Lemma~\ref{lem:2}]

Without loss of generality, suppose $p=0 \in \compx{n}$. By Lemma~\ref{lem:10}, there exists a smooth function $\td{f}$ defined on $\compx{n}$, such that $\td{f}$ has compact support near the origin,  $\iota^*d^c \td{f} = 0$ and $\td{f}$ has a local minimum at the origin. In fact, by construction, $\td{f}$ is nonnegative, with a (non-strict) local minimum equal to $0$ attained on $JT_0M$. Let $W_p:=(\supp{\td{f}})^\circ$.

Suppose $\td{\Omega}$ is a neighborhood of $M$ on which $dist^2(\cdot, M) \big |_{\td{\Omega}}$ is smooth and strictly plurisubharmonic. We can also assume that $W_p \Subset \td{\Omega}$.  Then, for a sufficiently small $C>0$, the function defined on $\td{\Omega}$ as
\begin{equation}\label{eq:16}
\td{g}= dist^2(\cdot, M) + C\td{f},
\end{equation}
satisfies,
\begin{enumerate}[$(a)$]
\item $\td{g}$ is strictly plurisubharmonic in $\td{\Omega}$;
\item $\td{g}$ is nonnegative and it has a strict local minimum at the origin which is equal to $0$;
\item $\td{g} = dist^2(\cdot, M)$ on $\td{\Omega} \setminus W_p$;
\item $\iota^*d^c\td{g}=0$, by Lemma~\ref{lem:10} and by the fact that $\iota^* d^c \left[dist^2(\cdot, M)\right] =0$.
\end{enumerate}

Let $0<\ee'<\ee$ be sufficiently small, by which we mean that $B(0, \ee)$ is included in a neighborhood of the origin $U \Subset W_p$ on which $\td{g}$ has a strict minimum at the origin, and let $a_\ee := \max\{\td{g}(z) : z\in \close{B(0, \ee')} \}$. By making $\ee'$ even smaller if necessary, there exists a neighborhood $0\in V_\ee \subset B(0, \ee)$ such that $\td{g}(z)>a_\ee$ for all $z \in U \setminus \close{V}_\ee$ and $\td{g}(z)\leq a_\ee$ for all $z \in \close{V}_\ee$. 

Define $\sigma_\ee : \real{}_{\geq 0} \rightarrow \real{}$ to be a nonnegative smooth, convex, non-decreasing function such that $\sigma_\ee(t)=0$ for all $t \in  [0, a_\ee]$. It follows that the function 
\begin{equation*}\label{eq:22}
\td{g}_\ee := \sigma_\ee \circ \td{g}
\end{equation*}
is identically zero in $\close{V}_\ee$. Since $dist^2(\cdot, M) = 0$ on $M$ and $f=0$ on the complement of $W_p =(\supp{f})^\circ$, the function  $\td{g}$ vanishes on $M \setminus W_p$, hence $\td{g}_\ee$ vanishes on a neighborhood $\Omega'_\ee$ of $M \setminus W_p$. Note that, for sufficiently small $\ee>0$ (hence, for sufficiently small $a_\ee$), we can ensure that $\Omega'_\ee \cap V_\ee = \emptyset$. Since $\td{g}$ is strictly plurisubharmonic in $\td{\Omega}$, $\td{g}_\ee$ is plurisubharmonic in $\td{\Omega}$ and strictly plurisubharmonic in $\td{\Omega} \setminus (\close{\Omega'}_\ee \cup \close{V}_\ee)$. Moreover, since $\iota^*d^c\td{g} = 0$ and
\begin{equation*}\label{eq:23}
dd^c\td{g}_\ee = dd^c(\sigma_\ee \circ \td{g}) = \sigma_\ee''d\td{g} \wedge d^c\td{g} + \sigma_\ee'dd^c\td{g},
\end{equation*}
it follows that $\iota^*dd^c\td{g}_\ee = 0$.

The set $\Omega_\ee:=\Omega'_\ee\cup W_p$ is a neighborhood of $M$. Let $\Chi : \compx{n} \rightarrow \real{}$ be a smooth cut-off function, which is identically $0$ on a neighborhood of the origin $Z$ and equal to $1$ on the complement of a slighter larger neighborhood $Z'$, where $V_\ee \subset Z \Subset Z' \Subset W_p$. We can also ensure that $Z' \cap \Omega'_\ee = \emptyset$ (because $\Omega'_\ee \cap V_\ee = \emptyset$). For a sufficiently small constant $C_\ee>0$, the function defined on $\Omega_\ee$ as
\begin{equation*}\label{eq:221}
g_\ee:= \td{g}_\ee + C_\ee(\Chi \cdot dist^2(\cdot, M))
\end{equation*}
has the required properties.
\end{proof}

Let $S$ be the immersion considered at the beginning of this section, with finitely many double self-intersections, $p_1, \dots, p_N \in S$.

\begin{lemma}\label{lem:3}
For any sufficiently small $\ee>0$ there exist a neighborhood $\Omega_\ee$ of $S$, neighborhoods $p_j \in V_\ee^j \subset \Omega_\ee$, with $\diam V_\ee^j < \ee$ for all $j=1\dots N$, and a smooth function $\rho_\ee : \Omega_\ee \rightarrow \real{}$ such that
\begin{enumerate}[$(i)$]
\item $\rho_\ee \equiv 0$ in $V_\ee^j$, $j=1\dots N$;
\item $\rho_\ee$ is plurisubharmonic in $\Omega_\ee$ and strictly plurisubharmonic in $\Omega_\ee \setminus \cup_{j=1}^N \close{V_\ee^j}$;
\item $\iota^*dd^c\rho_\ee = 0$.
\end{enumerate}
\end{lemma}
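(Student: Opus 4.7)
The plan is to apply Lemma~\ref{lem:2} separately to each smooth component of $S$ at each singular point and then paste the resulting local functions into the desired $\rho_\ee$.

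For $\ee>0$ sufficiently small, the balls $B(p_j,\ee)$ are pairwise disjoint and, for each $j$, $S\cap B(p_j,\ee)$ decomposes into $l_j$ smooth totally real compact submanifolds with boundary $S_j^k$, $k=1,\ldots,l_j$, meeting only at $p_j$. For each pair $(j,k)$, I would apply Lemma~\ref{lem:2} with $M=S_j^k$, $p=p_j$, and parameter $\ee/2$, obtaining a smooth nonnegative function $g_{\ee,j,k}$ on a neighborhood $\Omega_{\ee,j,k}$ of $S_j^k$ that vanishes on a neighborhood $V_{\ee,j,k}\Subset B(p_j,\ee/2)$ of $p_j$, is plurisubharmonic on $\Omega_{\ee,j,k}$, strictly plurisubharmonic on $\Omega_{\ee,j,k}\setminus\close{V_{\ee,j,k}}$, equals $C_{\ee,j,k}\cdot dist^2(\cdot,S_j^k)$ on $\Omega_{\ee,j,k}\setminus W_{p_j}^k$ for a fixed neighborhood $W_{p_j}^k\Subset B(p_j,\ee)$ of $p_j$, and satisfies $\iota^*d^c g_{\ee,j,k}=0$. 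Setting $V_\ee^j:=\bigcap_{k=1}^{l_j}V_{\ee,j,k}$, since $V_\ee^j\Subset B(p_j,\ee/2)$ we obtain $\diam V_\ee^j<\ee$ as required.

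To assemble $\rho_\ee$, I would choose smooth cutoffs $\chi_{j,k}:\compx{n}\to[0,1]$ identically equal to $1$ on a thin tubular neighborhood of $S_j^k$ and supported inside $\Omega_{\ee,j,k}$; by shrinking each $\Omega_{\ee,j,k}$ if needed, these supports can be arranged to be pairwise disjoint across distinct values of $j$ (each $\Omega_{\ee,j,k}$ can be chosen within a small neighborhood of $B(p_j,\ee)$ and the $B(p_j,\ee)$'s are disjoint). Then $\rho_\ee:=\sum_{j,k}\chi_{j,k}\,g_{\ee,j,k}$ is defined on a sufficiently thin tubular neighborhood $\Omega_\ee$ of $S$. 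The vanishing property (i) holds on each $V_\ee^j$: every same-$j$ summand vanishes there (as $V_\ee^j\subset V_{\ee,j,k}$), and every different-$j$ summand vanishes by the disjoint-support arrangement. The isotropy condition (iii) follows because each $\chi_{j,k}$ is locally constant on a full open neighborhood of $S_j^k$ in $\compx{n}$, so $d\chi_{j,k}\equiv 0$ and $d^c\chi_{j,k}\equiv 0$ on $S_j^k$; together with $\iota^*d^c g_{\ee,j,k}=0$ and the fact that at each $p_j$ every $g_{\ee,j,k}$ attains a local minimum so all its first-order derivatives vanish there, this yields $\iota^*d^c\rho_\ee=0$ on $S$, and therefore $\iota^*dd^c\rho_\ee=d(\iota^*d^c\rho_\ee)=0$.

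The main obstacle is verifying condition (ii): multiplication by the cutoffs produces indefinite cross-terms $d\chi_{j,k}\wedge d^c g_{\ee,j,k}+dg_{\ee,j,k}\wedge d^c\chi_{j,k}+g_{\ee,j,k}\,dd^c\chi_{j,k}$ in the transition zones where $0<\chi_{j,k}<1$, which threatens to destroy plurisubharmonicity there. I would handle this by arranging each transition zone to lie in the region where $g_{\ee,j,k}=C_{\ee,j,k}\cdot dist^2(\cdot,S_j^k)$ and by taking $C_{\ee,j,k}$ large enough that the strictly positive normal Hessian of $dist^2(\cdot,S_j^k)$ dominates these cross-terms in a thin tubular neighborhood of $S_j^k$; the quantitative bookkeeping needed to make this work globally on $\Omega_\ee$ is where most of the remaining technical effort lies.
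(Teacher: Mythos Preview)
Your setup and the invocation of Lemma~\ref{lem:2} on each smooth component are exactly what the paper does. The difficulty is in the gluing, and there your proposal has a genuine gap.

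First, your function $\rho_\ee=\sum_{j,k}\chi_{j,k}\,g_{\ee,j,k}$ vanishes identically on the part of $\Omega_\ee$ that lies away from all the sets $\supp{\chi_{j,k}}$. Since each $\chi_{j,k}$ is supported in a small neighbourhood of the local piece $S_j^k\subset B(p_j,\ee)$, your $\rho_\ee$ is identically zero on a neighbourhood of $S$ away from $\cup_j B(p_j,\ee)$, so condition~(ii) (strict plurisubharmonicity on $\Omega_\ee\setminus\cup_j\close{V_\ee^j}$) fails outright there. Second, your proposed fix for the transition zones cannot work as stated: in the region where $g_{\ee,j,k}=C_{\ee,j,k}\cdot dist^2(\cdot,S_j^k)$, every term in $dd^c(\chi_{j,k}\,g_{\ee,j,k})$---the good term $\chi_{j,k}\,dd^c g_{\ee,j,k}$ as well as the bad cross-terms---scales linearly with $C_{\ee,j,k}$, so enlarging the constant does not improve the balance.

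The paper sidesteps both problems by \emph{not} using cutoffs at all. It makes the neighbourhoods $\Omega_\ee^k$ of the smooth components so narrow that they meet only inside $V_\ee:=\bigcap_k V_{\ee,k}$, where every $g_\ee^k$ already vanishes; then $\rho_\ee$ is defined \emph{piecewise} as $g_\ee^k$ on $\Omega_\ee^k\setminus\close{V_\ee}$ and $0$ on $\close{V_\ee}$, which patches smoothly and keeps each $g_\ee^k$ intact (hence plurisubharmonic) without any cross-terms. The extension to a full neighbourhood of $S$ then comes for free from property~(iv) of Lemma~\ref{lem:2}: outside $W_p$, each $g_\ee^k$ equals the common function $C_\ee\cdot dist^2(\cdot,S)$, which is strictly plurisubharmonic on a tube around the smooth part of $S$ and satisfies $\iota^*dd^c\bigl(dist^2(\cdot,S)\bigr)=0$. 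This is the missing idea in your construction.
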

\begin{proof}
Without any loss of generality we can suppose that $S$ has only one (double) self-intersection at the origin. The construction can be easily extended to the general case.

Let  $S_1,S_2$ be two smooth components of $S$ at the origin and let $\ee>0$ be sufficiently small. By Lemma~\ref{lem:2}, it follows that, for $j=1,2$, there exist $\Omega_\ee^j$, a neighborhood of $S_j$, a smooth nonnegative function $g_\ee^j : \Omega_\ee^j \rightarrow \real{}$ and neighborhoods $0\in V_\ee^j \Subset B(0, \ee) \Subset W^j \subset \Omega_\ee^j$ such that 
\begin{enumerate}[$(i)$]
\item $g_\ee^j$ is plurisubharmonic in $\Omega_\ee^j$ and strictly plurisubharmonic in $\Omega_\ee^j \setminus \close{V}_\ee^j$;
\item $g_\ee^j = 0$ in $V_\ee^j$;
\item $g_\ee^j = C_\ee \cdot dist^2(\cdot, S_j)$ in $\Omega_\ee^j \setminus W^j$, where $C_\ee := \min\{C_\ee^1, C_\ee^2\}$ and the constants $C_\ee^j$ are given by Lemma~\ref{lem:2};
\item $\iota^*dd^cg_\ee^j = 0$.
\end{enumerate}

Let $V_\ee := V_\ee^1 \cap V_\ee^2$. By construction, $V_\ee \Subset B(0, \ee) \Subset \Omega_\ee^j, j=1,2$. Make the neighborhoods $\Omega_\ee^1, \Omega_\ee^2$ narrow enough  so that $(\Omega_\ee^1 \cap \Omega_\ee^2) \setminus V_\ee = \emptyset$. Let $\Omega_\ee := \Omega_\ee^1 \cup \Omega_\ee^2 \cup V_\ee$, and define $\rho_\ee : \Omega_\ee \rightarrow \real{}$ as
\begin{equation*}\label{eq:24}
\rho_\ee(z) =
\begin{cases}
0, &z \in  \close{V_\ee},\\
g_\ee^1(z), &z \in \Omega_\ee^1 \setminus \close{V_\ee},\\
g_\ee^2(z), &z \in \Omega_\ee^2 \setminus \close{V_\ee},
\end{cases}
\end{equation*}
which satisfies
\begin{equation*}\label{eq:25}
\rho_\ee(z) =
\begin{cases}
0, &z \in  \close{V_\ee},\\
C_\ee \cdot dist^2(\cdot, S), &z \in \Omega_\ee \setminus W_p,
\end{cases}
\end{equation*}
where $W_p = W_p^1 \cup W_p^2$. Lastly, extend $\Omega_\ee$ to a full neighborhood of $S$ and $\rho_\ee$ accordingly to obtain the required function.
\end{proof}

In the following, we set $S_\delta := \{z \in \compx{n} : dist(z, S) \leq \dd\}$, where $\delta >0$.
\begin{lemma} \label{lem:40}
For every neighborhood $\Omega$ of $S$, there exists $\delta_0 >0$, which depends on $\Omega$, such that $\rhull{S_\delta} \Subset \Omega$ for all $0<\delta \leq \delta_0$.
\end{lemma}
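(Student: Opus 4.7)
The plan is to argue by contradiction using the hypothesis that $S$ is rationally convex, together with an upper semicontinuity argument for the rational hull under Hausdorff decreasing sequences.

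First, I would suppose the lemma fails. Then there exist a sequence $\delta_n \searrow 0$ and points $z_n \in \rhull{S_{\delta_n}} \setminus \Omega$. Since $S$ is compact, each $S_{\delta_n}$ is contained in a fixed bounded set, and by the maximum principle so is $\rhull{S_{\delta_n}}$. Hence, passing to a subsequence, I may assume $z_n \to z_0$, and since $\compx{n} \setminus \Omega$ is closed, $z_0 \notin \Omega$. In particular $z_0 \notin S$.

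Next, the goal is to show $z_0 \in \rhull{S}$, which would contradict the rational convexity assumption $\rhull{S} = S \subset \Omega$. Let $R$ be an arbitrary rational function holomorphic on $S$. Its pole set $P$ is closed and disjoint from the compact set $S$, so $\mathrm{dist}(P, S) =: 2\eta > 0$. For all $\delta_n < \eta$, the set $S_{\delta_n}$ is disjoint from $P$, so $R$ is holomorphic on $S_{\delta_n}$, and the inequality defining the rational hull yields
\begin{equation*}
\abs{R(z_n)} \leq \sup_{w \in S_{\delta_n}} \abs{R(w)}.
\end{equation*}
The continuous function $\abs{R}$ is well defined on the compact set $S_{\eta}$, and by uniform continuity, for every $\tau>0$ there exists $\delta'>0$ such that $\abs{R(w)} \leq \sup_S \abs{R} + \tau$ whenever $w \in S_{\delta'}$. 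Therefore $\sup_{S_{\delta_n}} \abs{R} \to \sup_S \abs{R}$. Passing to the limit and using continuity of $\abs{R}$ at $z_0$,
\begin{equation*}
\abs{R(z_0)} = \lim_{n \to \infty} \abs{R(z_n)} \leq \lim_{n \to \infty} \sup_{w \in S_{\delta_n}} \abs{R(w)} = \sup_{w \in S} \abs{R(w)}.
\end{equation*}
Since $R$ was an arbitrary rational function holomorphic on $S$, this gives $z_0 \in \rhull{S} = S$, contradicting $z_0 \notin \Omega \supset S$.

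The argument is essentially soft: the only delicate point is ensuring that the rational test functions $R$ are actually defined on $S_{\delta_n}$ for large $n$, which is the reason for isolating the positive distance $2\eta$ between $S$ and the poles of $R$ before letting $\delta_n \to 0$. The bounded setting (so compactness yields a convergent subsequence) and uniform continuity of $\abs{R}$ on the fixed compact neighborhood $S_\eta$ handle the rest. I expect no significant obstacle beyond this standard packaging.
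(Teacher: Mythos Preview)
Your proof is correct and follows the same contradiction strategy as the paper: extract a convergent subsequence $z_n \to z_0 \notin \Omega$ from $\rhull{S_{\delta_n}}$ and show $z_0 \in \rhull{S}$, contradicting rational convexity. The only difference is in how membership in the hull is tested at the end: the paper invokes the polynomial-image characterization $\rhull{X} = \{z : P(z) \in P(X) \text{ for every polynomial } P\}$ and argues $P(S) = \bigcap_\nu P(S_{1/\nu})$, while you use the rational-function definition directly. Your route is arguably more elementary, but it needs one remark you left implicit: when you invoke ``continuity of $|R|$ at $z_0$,'' you must know $z_0$ is not a pole of $R$. This is immediate, since otherwise $|R(z_n)| \to \infty$, contradicting the bound $|R(z_n)| \leq \sup_{S_{\delta_n}}|R| \to \sup_S |R| < \infty$; the paper's polynomial-image approach sidesteps this issue because polynomials are entire.
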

\begin{proof}
Let $\Omega$ be a neighborhood of $S$. It suffices to show that there exists $\nu_0 \in \integ{+}$ such that $\rhull{S_{1/\nu}} \Subset \Omega$ for all integers $\nu \geq \nu_0$. In fact, because $S_{1/{(\nu+1})} \subset S_{1/\nu}$ for all $\nu$, it is enough to prove that there exists $\nu_0 \in \integ{+}$ such that $\rhull{S_{1/\nu_0}} \Subset \Omega$. Assuming the contrary, we obtain a sequence $\{z_\nu \in \compx{n} : \nu \in \integ{+}\}$ such that $z_\nu \in \rhull{S_{1/\nu}}$ and $z_\nu \not \in \Omega$ for all $\nu \in \integ{}$. Since $\rhull{S_1}$ is compact we may assume that this sequence converges to some $z \in \compx{n}\setminus \Omega$, which means that $z \not \in S$, since $S \subset \Omega$. Recall that for any compact $X \subset \compx{n}$ we have (see for example \cite[Proposition 1.1]{7})
\begin{equation*} \label{eq:27}
\rhull{X} = \{z \in \compx{n} : f(z) \in f(X), \text{ for all holomorphic polynomials } f\}.
\end{equation*}
Since $z \not \in S$ and $S$ is rationally convex, there exists a holomorphic polynomial $P$ such that $P(z) \not \in P(S)$. By a continuity/compactness type of argument one can easily show that $P(S)=\cap_{\nu=1}^\infty P(S_{1/\nu})$. Since $P(z) \not \in P(S)$ and $P(S)$ is compact, there exists $r>0$ such that $B(P(z), r) \cap P(S) = \emptyset$ so, for all but finitely many elements of the sequence, we have $P(z_\nu) \in B(P(z), r)$. On the other hand, since $P(S)=\cap_{\nu=1}^\infty P(S_{1/\nu})$, there exists a neighborhood $U$ of $P(S)$ such that $U \cap B(P(z), r) = \emptyset$ and all but finitely many elements $P(z_\nu)$ belong to $U$, which leads to a contradiction.
\end{proof}

\begin{proof}[Proof of Proposition~\ref{prp:1}]
For $\ee >0$ sufficiently small, let $\Omega_\ee$ be the neighborhood of $S$, $\td{V}_\ee^j \subset \Omega_\ee$, $\diam \td{V}_\ee^j < \ee$, the neighborhoods of the self-intersection points $p_j$, $j=1, \dots, N$, and $\rho_\ee$ the function given by Lemma~\ref{lem:3}. Let $\Omega$ be a neighborhood of $S$ such that $\Omega \Subset \Omega_\ee$. By Lemma~\ref{lem:40} there exists $\delta >0$ such that $\rhull{S_\delta} \Subset \Omega$. By Duval-Sibony \cite[Theorem 2.1]{1} there exists a smooth plurisubharmonic function $\psi_\delta : \compx{n} \rightarrow \real{}$ which is strictly plurisubharmonic on $\compx{n} \setminus \rhull{S_\delta}$ and satisfies $dd^c\psi_\delta|_{\rhull{S_\delta}} = 0$. Let $W_\ee^j := \td{V}_\ee^j \cap S$, which for sufficiently small $\ee>0$ is contractible, $U_\ee^j:= \td{V}_\ee^j \cap S_\dd$, $V_\ee^j := \td{V}_\ee^j \cap \Omega$ and define 
\begin{equation*}\label{eq:26}
\varphi_\ee(z) := \psi_\delta + C\Chi(z) \rho_\ee(z),
\end{equation*}
where $C$ is a positive constant and $\Chi$ is a smooth cutoff function equal to $1$ in a neighborhood of $S$ which contains $\Omega$ and equal to $0$ in the complement of a slightly larger neighborhood  of $S$, both neighborhoods being compactly included in $\Omega_\ee$. Then, for a sufficiently small $C>0$, $\varphi_\ee$ is strictly plurisubharmonic in $\compx{n}\setminus \bigcup_{j=1}^k\close{V_\ee^j}$ and the rest of the properties stated in Proposition~\ref{prp:1} are satisfied. 
\end{proof}

\begin{proof}[Proof of Corollary \ref{cor:1}]
Let $\varphi_j := \varphi_{\ee_j} : \compx{n}_{z=(z_1, \dots, z_n)} \rightarrow \real{n}$, $z_\mu = x_\mu+ix_{\mu+n}$, $\mu = 1, \dots, n$, where $\displaystyle\{\ee_j\}_{j\in \integ{+}}$ is a decreasing sequence of (sufficiently small) positive numbers converging to $0$ and $\varphi_{\ee_j}$ are the functions given by Proposition~\ref{prp:1}. Let $B \subset \compx{n}$ be a closed ball such that $\cup_j \Omega_{\ee_j} \Subset B$, where $\Omega_{\ee_j}$ are the neighborhoods of $S$ given by Lemma~\ref{lem:3}. Then, there exist positive reals, $\alpha_j >0$, $j \in \integ{+}$, such that 
\begin{equation*}
\varphi_0 := \sum_j \alpha_j\varphi_j <\infty, \hrs{10} \sum_j \alpha_j \frac{\partial^l \varphi_j}{\partial x_{\mu_1} \dots \partial x_{\mu_l}} < \infty
\end{equation*}
in $B$, for all $1 \leq l \leq k$ and $\mu_1, \dots, \mu_l \in \{ 1, \dots, 2n\}$ where the convergence of the series is uniform. By making use of a classic result in real single variable calculus, see for example \cite[ Theorem 7.17]{11}, it is straightforward to show that $\varphi_0$ is $C^k$-smooth in $B$ and that
\begin{equation*}
\frac{\partial^l \varphi_0}{\partial x_{\mu_1} \dots \partial x_{\mu_l}} = \sum_j \alpha_j \frac{\partial^l \varphi_j}{\partial x_{\mu_1} \dots \partial x_{\mu_l}}
\end{equation*}
for all $1 \leq l \leq k$ and $\mu_1, \dots, \mu_l \in \{ 1, \dots, 2n\}$. In particular, we have
\begin{equation} \label{eq:291}
dd^c\varphi_0 = \sum_j \alpha_j dd^c \varphi_j.
\end{equation}
In $\compx{n} \setminus B$, we have $\varphi_0 = C\psi$, where $\displaystyle C= \sum_j \alpha_j < \infty$, hence $\varphi_0$ is $C^\infty$-smooth there and clearly satisfies (\ref{eq:291}). It follows that $\varphi_0$ satisfies the required properties.
\end{proof}

\section{The Sufficient Condition for $S$ to be Rationally Convex}\label{sec:4}

Let $S$ be the immersion defined in Theorem~\ref{th1}. In this section we prove the converse statement $(ii) \Rightarrow (i)$ in Theorem~\ref{th1} which translates into the following result.

\begin{proposition} \label{prp:2}
Suppose that $S$ is locally polynomially convex and that there exist contractible neighborhoods $W_j$ of $p_j$ in $S$ such that for any neighborhood $\Omega$ of $S$, there exist a smooth plurisubharmonic function $\varphi : \compx{n} \rightarrow \real{}$ and neighborhoods $U_j \subset V_j$ of $p_j$, $j = 1, \dots, N$, where $\{V_j\}_j$ are pairwise disjoint, such that $U_j \cap S = W_j$, $\cup_{j=1}^N V_j \Subset \Omega$, $dd^c \varphi=0$ on $\cup_{j=1}^N \close{U_j}$, $\varphi$ is strictly plurisubharmonic on $\compx{n} \setminus \bigcup_{j=1}^N\close{V}_j$ and $\iota^* dd^c \varphi = 0$. Then $S$ is rationally convex.
\end{proposition}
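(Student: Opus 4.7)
The plan is to follow the Duval--Sibony method from \cite{1} (see also \cite{2}, \cite{6}), adapted to accommodate the self-intersection points, where the local polynomial convexity hypothesis plays a crucial role. First I would reduce the problem to the standard hypersurface criterion: a compact $X \subset \compx{n}$ is rationally convex if and only if for every $q \in \compx{n} \setminus X$ there exists a complex algebraic hypersurface through $q$ disjoint from $X$. Fix an arbitrary $q \in \compx{n} \setminus S$, choose a neighborhood $\Omega$ of $S$ with $q \notin \close{\Omega}$, and apply the hypothesis to this $\Omega$ to obtain the data $W_j$, $U_j \subset V_j$, and a smooth plurisubharmonic function $\varphi : \compx{n} \to \real{}$ satisfying $(a)$--$(e)$.

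Next I would produce local holomorphic ``peak-type'' functions on $S$. At a smooth totally real point $p \in S \setminus \bigcup_j W_j$, the conditions $\iota^*dd^c\varphi = 0$ and total reality allow us to solve $\iota^* d^c\varphi = d\alpha$ on a contractible piece of $S$ near $p$, extend $\alpha$ to a holomorphic $1$-form on a small ball around $p$ in $\compx{n}$, and integrate; after a quadratic correction coming from the strict plurisubharmonicity of $\varphi$ off $\bigcup_j \close{V_j}$, one obtains a holomorphic function $h_p$ on a small ball around $p$ with $h_p(p) = 0$ and $\re{h_p} \leq -c\, dist^2(\cdot, S)$ nearby, for some $c>0$. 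At a self-intersection point $p_j$ this mechanism fails because $dd^c\varphi \equiv 0$ on $\close{U_j}$, so $\varphi$ carries no useful local information; instead I would invoke the hypothesis that $S$ is polynomially convex near $p_j$ and apply Oka--Weil to produce a holomorphic function $h_j$ on a neighborhood of $S \cap \close{V_j}$ whose real part is strictly negative on $W_j \setminus \{p_j\}$ and vanishes at $p_j$. This is the step where local polynomial convexity is indispensable.

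The third step is to patch these local holomorphic pieces, together with a cutoff equal to $1$ near $q$, into a single quasi-holomorphic function $\chi$ defined in a neighborhood of $S \cup \{q\}$, and then correct the $\bar\partial$ error using H\"ormander's $L^2$ estimate with weight $e^{-t\varphi}$ for large $t$ on a suitable pseudoconvex neighborhood of $\close{\Omega} \cup \{q\}$. Because $\varphi$ is strictly plurisubharmonic off $\bigcup_j \close{V_j}$ and $\bar\partial\chi$ is supported there, the weighted estimate yields a solution $u_t$ of $\bar\partial u_t = \bar\partial \chi$ which is arbitrarily small on $S$ in $L^2$, hence uniformly small by interior ellipticity; the correction $F_t := \chi - u_t$ is then a globally defined holomorphic function with $F_t(q)$ bounded away from zero and $|F_t|$ uniformly small on $S$. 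A final Runge/Oka--Weil approximation replaces $F_t$ by a holomorphic polynomial on a polynomially convex neighborhood of $S \cup \{q\}$, and an appropriate translation of that polynomial produces an algebraic hypersurface through $q$ disjoint from $S$.

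The main obstacle is the delicate coordination at the singular points: near $p_j$, the potential $\varphi$ is flat, so the peak-function construction used on the smooth part of $S$ degenerates and must be replaced by the Oka--Weil substitute; simultaneously, the $\bar\partial$-solution has no weight-control near $p_j$ and must be kept small there by purely local arguments using the holomorphic $h_j$. Reconciling these two constructions is precisely the role played by the nested pair $U_j \subset V_j$ with the gap region $V_j \setminus U_j$ where $\varphi$ transitions from flat to strictly plurisubharmonic, and it is the need for this reconciliation that explains why local polynomial convexity at the singular points cannot be dispensed with in this direction of the equivalence.
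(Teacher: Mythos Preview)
Your plan has a genuine gap at the globalization step. The Duval--Sibony mechanism does not proceed by building local peak functions $h_p$ at the smooth points of $S$ and then patching them; it produces a \emph{single global} function $h$ with $|h|=e^{\varphi}$ on $S$ and $|h|<e^{\varphi}$ off a compact neighborhood. For this one needs that the closed $1$-form $\iota^*d^c\varphi$ has periods in $2\pi\mathbb{Z}$, so that a global argument $\mu$ with $d\mu=\iota^*d^c\varphi$ exists and one can set $h=e^{\varphi+i\mu}$ (first on $S\cup\bigcup_j U_j$, then extended \`a la H\"ormander--Wermer). The paper carries out exactly this step: it constructs auxiliary functions $\psi_k$ with $\iota^*d^c\psi_k=\iota^*\beta_k$ for a basis $\beta_k$ of $H^1$, perturbs $\varphi$ to $M\varphi_\lambda$ to rationalize the periods, and then integrates globally. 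Your local $h_p$'s carry no common normalization and cannot be glued into a single quasi-holomorphic $\chi$ whose $\bar\partial$ is supported where $\varphi$ is strictly plurisubharmonic; the ``patching'' in your Step~3 is therefore undefined, and the subsequent $L^2$ argument has nothing to act on. In particular, your inequality $\mathrm{Re}\,h_p\le -c\,\mathrm{dist}^2(\cdot,S)$ is not the object that drives the argument; the relevant inequality is $|h|\le e^{\varphi}$ with equality exactly on $S$ to order $\ge 1$, which is precisely hypothesis~(1)--(3) of Lemma~\ref{lem:4}.

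The use of local polynomial convexity is also different from what you propose. You invoke Oka--Weil to manufacture a holomorphic $h_j$ near each $p_j$, but polynomial convexity of $S\cap\overline{B(p_j,\varepsilon)}$ does not by itself yield a holomorphic function with prescribed sign of its real part on $W_j\setminus\{p_j\}$. In the paper the local polynomial convexity enters through Lemma~\ref{lem:5}: it furnishes a continuous nonnegative plurisubharmonic $\rho$ on a neighborhood of $S$ with $\{\rho=0\}=S$ and $\rho=\mathrm{dist}^2(\cdot,S)$ outside small balls $B(p_j,\delta)$. Adding $C\rho$ to $\varphi$ makes the potential strictly plurisubharmonic at every point of $S$ where $h$ fails to be holomorphic (the transitional region $\overline{V_j}\setminus Z_j$), so that condition~(4) of Lemma~\ref{lem:4} is met and $X=\{|h|=e^{\varphi}\}$ is rationally convex with $S\subset X\subset\Omega$. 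Your Oka--Weil substitute does not interface with the $(h,\varphi)$ pair in the way the lemma requires.
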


In the proof we make use of the following technical lemma proved in \cite{2} (see also \cite{6}). In \cite{2}, the lemma is proved for totally real immersions of maximal dimension. We remark that the lemma is true in the more general case that we consider in this paper, the proof being practically the same.
\begin{lemma}\label{lem:4}
Let $\phi$ be a smooth plurisubharmonic function on $\compx{n}$ and $h$ a smooth complex-valued function on $\compx{n}$ satisfying the following properties
\begin{enumerate}[(1)]
\item $\abs{h} \leq e^\varphi$ and $X:= \{\abs{h} = e^\varphi\}$ is compact;
\item $\close{\partial}h = O(dist(\cdot, S)^{\frac{3n+5}{2}})$;
\item $\abs{h} = e^\varphi$ with order at least $1$ on $S$;
\item For any point $p \in X$ at least one of the two following conditions holds: $(i)$ $h$ is holomorphic in a neighborhood of $p$, or $(ii)$ $p$ is a smooth point of $S$ and $\varphi$ is strictly plurisubharmonic at $p$.
\end{enumerate}
Then $X$ is rationally convex.
\end{lemma}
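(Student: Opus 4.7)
To prove that $X$ is rationally convex, I would show that every $q_0 \in \compx{n} \setminus X$ lies outside the rationally convex hull $\rhull{X}$ by constructing an entire holomorphic function $F \colon \compx{n} \to \compx{}$ with $F(q_0) = 0$ and $\inf_X |F| > 0$. Given such $F$, its Taylor polynomials $P_k$ converge to $F$ uniformly on the compact set $X \cup \{q_0\}$; for $k$ large the polynomial $Q_k := P_k - P_k(q_0)$ satisfies $Q_k(q_0) = 0$ and $|Q_k| > 0$ on $X$, which witnesses $q_0 \notin \rhull{X}$. Since $q_0$ was arbitrary in $\compx{n} \setminus X$, this gives rational convexity.

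To build $F$, I would perturb $h$ by solving a $\close{\partial}$-equation. Let $\chi$ be a smooth cutoff equal to $1$ in a small ball around $q_0$, supported in a slightly larger ball disjoint from $X$, and set $h_1 := h - h(q_0)\chi$, so that $h_1(q_0) = 0$, $h_1 \equiv h$ on $X$, and $\close{\partial} h_1 = \close{\partial} h - h(q_0)\close{\partial}\chi$. I would then invoke H\"ormander's $L^2$-theorem to solve $\close{\partial} u = \close{\partial} h_1$ using the plurisubharmonic weight
\[
\Phi(z) := A\,\varphi(z) + (n+1)\log|z-q_0|^2 + \psi(z),
\]
where $A \gg 1$ and $\psi$ is a smooth globally psh function engineered so that $i\partial\close{\partial}\Phi$ is strictly positive on a neighborhood of $\supp{\close{\partial} h_1}$. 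Hypothesis (4) is what makes this feasible: at each $p \in X$ either $h$ is holomorphic nearby (so $\close{\partial} h \equiv 0$ locally and no extra positivity is needed) or $\varphi$ itself is already strictly psh at a smooth point of $S$, supplying the needed Levi-form lower bound. The logarithmic pole $(n+1)\log|z-q_0|^2$ forces $u(q_0) = 0$ for any $L^2(e^{-\Phi})$ solution, so $F := h_1 - u$ is entire with $F(q_0) = 0$.

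The crux is to bound $u$ pointwise on $X$ so that $|F| \geq |h| - |u| = e^\varphi - |u|$ remains strictly positive there. H\"ormander's estimate yields
\[
\int_{\compx{n}} |u|^2 e^{-\Phi} \,\leq\, C \int_{\compx{n}} |\close{\partial} h_1|^2 e^{-\Phi},
\]
and the decay hypothesis (2), $|\close{\partial} h|^2 = O(dist(\cdot, S)^{3n+5})$, is precisely what makes the right-hand side finite and small: even though $\varphi$ may fail to be strictly psh along $S$ (so that the Levi-form weight is not uniformly positive near $S$), the high-order vanishing of $\close{\partial} h$ there compensates. To pass from an $L^2$ bound on $u$ to a $C^0$ bound on $X$, I would differentiate $\close{\partial} u = \close{\partial} h_1$ and apply interior elliptic regularity for $\close{\partial}$ together with Sobolev embedding $H^k \hookrightarrow C^0$ for $k > n$; the exponent $(3n+5)/2$ in (2) is calibrated so that enough derivatives of $\close{\partial} h$ remain $L^2$-integrable against $e^{-\Phi}$ for this bootstrapping to close. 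Hypothesis (3), the order-one vanishing of $e^\varphi - |h|$ along $S$, is used implicitly to keep the weighted integrals near $S$ under control.

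The main obstacle is the coordinated design of $\psi$ and the choice of $A$: the weight $\Phi$ must simultaneously be globally psh, be strictly psh near $\supp{\close{\partial} h_1}$, have its only logarithmic singularity at $q_0$, and admit an $L^2$-to-$C^0$ conversion constant on $X$ that is independent of $A$, so that $\|u\|_{C^0(X)}$ can be driven to zero by taking $A$ large. Accomplishing all four simultaneously -- in a way that is uniform across the (finitely many) local types of singular points permitted by hypothesis (4) -- is the real technical content of the argument.
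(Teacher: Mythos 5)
The paper does not prove Lemma~\ref{lem:4} itself; it cites Gayet \cite{2} (whose argument in turn follows Duval--Sibony \cite{1}) and simply remarks that the proof carries over verbatim to immersions of lower dimension and to the more general singularities considered here. Your proposal correctly identifies the overall strategy of that proof: solve a $\close{\partial}$-equation with a plurisubharmonic weight carrying a logarithmic pole at the target point $q_0$ so that the solution vanishes there, obtain an entire function separating $q_0$ from $X$, and conclude by Taylor approximation. It also correctly reads hypothesis (4) as the source of the Levi-form positivity on $\supp \close{\partial} h$ and hypothesis (2) as the decay needed to close the weighted estimates near $S$.

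The central gap is that you keep $h$ fixed and try to win by sending the weight amplitude $A$ to infinity. This does not work, and indeed is not how the Duval--Sibony--Gayet argument proceeds: they replace $h$ by $h^m$ and pair it with the weight $e^{-2m\varphi}$, sending $m \to \infty$. The point is that both the data and the weight must be amplified \emph{together}. With $h$ fixed and weight $e^{-A\varphi}$, H\"ormander gives $\int |u|^2 e^{-A\varphi - \cdots} \leq C(A) \int |\close{\partial} h_1|^2 e^{-A\varphi - \cdots}$, and the passage to a pointwise bound on $X$ restores a factor of order $e^{A\varphi(p)}$; since $p$ lies in a neighborhood of $S$ (which meets $\supp\close{\partial}h$), the two exponentials are comparable and nothing is gained as $A$ grows — you obtain $|u(p)| \lesssim$ (something not tending to $0$) rather than $|u(p)| < e^{\varphi(p)}$. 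In the correct argument, one solves $\close{\partial} u_m = \close{\partial}(h^m\chi)$ with weight $e^{-2m\varphi - \psi - (n+1)\log|z-q_0|^2}$, uses $|h^m| \leq e^{m\varphi}$ to show the right-hand side grows only polynomially in $m$ (here the bound $|\close{\partial}(h^m)|^2 e^{-2m\varphi} \leq m^2 |\close{\partial} h|^2 e^{-2\varphi}$ is the key cancellation and is what makes hypothesis (1) indispensable), and then a sub-mean-value argument over balls of shrinking radius $\sim m^{-1/2}$, exploiting the strict plurisubharmonicity of $\varphi$ at smooth points of $S$ via the local holomorphic Taylor polynomial of $\varphi$, to prove $|u_m| < |h^m| = e^{m\varphi}$ on $X$ for $m$ large. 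The role of hypothesis (3) and the precise exponent $(3n+5)/2$ in (2) is to control the growth in $m$ of the derivatives of $\close{\partial}(h^m)$ that enter this pointwise bootstrap. None of this machinery is visible once $h^m$ is dropped, so as written the proposal cannot produce the required separation.

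There is also a smaller but genuine technical slip: with $h_1 := h - h(q_0)\chi$ and $\chi \equiv 1$ near $q_0$ one has $\close{\partial} h_1 = \close{\partial} h$ near $q_0$; since $q_0 \notin S$ the decay in (2) gives no vanishing there, and the weight $|z - q_0|^{-2(n+1)}$ is non-integrable at $q_0$, so the right-hand side of your $L^2$ estimate is infinite. The standard fix, which is part of the $h^m$ construction above, is to multiply $h^m$ by a cutoff that is $1$ on a neighborhood of $X$ and vanishes near $q_0$, so that $\close{\partial}(h^m\chi)$ is supported away from the log pole.
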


\begin{remark} \label{rem:1}
As it was already mentioned in \cite{6}, it can be seen from the proof of the lemma in \cite{2} that it is enough to assume that $\varphi$ is only continuous and not necessarily smooth at points where $h$ is holomorphic. In fact, the Lemma is also valid if $\varphi$ is only continuous at points in the complement of a neighborhood of $S$.
\end{remark}

To prove Proposition~\ref{prp:2} we will follow closely the method used in \cite{1} (see also \cite{2}, \cite{6}). In Step~ $1$ below we construct the function $h$ from the given plurisubharmonic function, $\varphi$. The resulting pair of functions, $(h, \varphi)$, does not entirely comply with the requirements of Lemma~\ref{lem:4}, because some of the points of $S$ do not satisfy condition $(4)$ of the lemma. That is why, in the second and last step of the proof, we further perturb $\varphi$ such that the modified function is strictly plurisubharmonic at all points of $S$ where $h$ is not holomorphic. In this last step we take full advantage of the polynomial convexity of $S$ near the singular points.

\vrs{5}

\noindent \it{Step 1: construction of the function h.} The conditions $(2), (3)$ in Lemma~\ref{lem:4} that $h$ and $\varphi$ must satisfy translate into the condition $\iota^*(d^c\varphi - d(\arg h))=0 $, or  in other words, the closed $1$-form $\iota^* d^c\varphi$ has to have integer periods (see \cite{2}, \cite{6}). We can meet this condition by perturbing $\varphi$ accordingly. 

Let $\Sigma$ be an open subset of $\compx{n}$ such that $S$ is a deformation retract of $\Sigma$. Consequently, $H_1(\Sigma, \integ{}) \cong H_1(S, \integ{})$. Let $\gamma_1, \dots, \gamma_l$ be a basis for $H_1(\Sigma, \integ{})$ which we may consider to be supported on $S\setminus \cup_{j=1}^N \close{V}_j$. By the de Rham theorem, there exist closed $1$-forms $\beta_1, \dots, \beta_l$ on $\Sigma$, with compact support, such that $\int_{\gamma_j}\beta_k = \delta_{jk}$. In fact, we can choose $\beta_k$ such that they vanish on each $U_j$, $j=1, \dots, N$.

We show next that there exist smooth functions $\psi_k, k=1, \dots, l$, with compact support in $\compx{n}$, such that $\iota^* d^c\psi_k = \iota^*\beta_k$ and $\psi_k \big |_{{U}_j} \equiv 0$, $j=1, \dots, N$. Suppose first that $\dim_{\real{}}S =n$ and let $\td{S} := S \setminus \cup_{j=1}^N U_j$, which is  a compact smooth submanifold of $\compx{n}$ with boundary. Fix $k \in \{1, \dots l\}$ and, to simplify the notations, denote $\psi := \psi_k, \beta:=\beta_k$. Let $p \in \td{S}$ be arbitrarily fixed. In fact, without losing any generality, suppose $p=0$ and $T_0\td{S} = \real{n}$. Then there exists a neighborhood of the origin $W$ in $\compx{n}$ and smooth real-valued functions $r_1, \dots, r_n$ defined on $\td{W}:=W \cap \td{S}$, such that $y_j = r_j(\mr{x})$ for all $z= \mr{x}+i\mr{y} \in \td{W}$, $\mr{x} = (x_1, \dots, x_2), \mr{y}=(y_1, \dots, y_n)$, and $\pdi{r_j}{x_k}(0)=0$ for all $j, k \in  \{1, \dots, n\}$. We would like to find smooth functions $\alpha_j : W \rightarrow \real{}$, $j=1,\dots,n$, so that if we define
\begin{equation} \label{eq:281}
\psi_0(z) := \sum_{j=1}^n \alpha_j(z)\left[r_j(\mr{x}) - y_j\right],
\end{equation}
then $\iota^* d^c\psi_0 = \iota^*\beta$ on some neighborhood of the origin included in $\td{W}$. A direct calculation gives
\begin{equation*}
\iota^*d^c\psi_0 = \sum_{j=1}^n \left( \td{\alpha}_j + \sum_{k=1}^n \td{\alpha}_k P_{jk} \right) dx_j,
\end{equation*}
where $\td{\alpha}_j$ is the pullback of $\alpha_j$ on $\td{W}$, given by $\td{\alpha}_j(\mr{x})=\alpha_j(x_1, r_1(\mr{x}), \dots, x_n, r_n(\mr{x}))$, and $\displaystyle P_{jk} =  \sum_{l=1}^n \pdi{r_k}{x_j} \pdi{r_j}{x_l}$, $j,k =1, \dots, n$. For every $z \in \td{W}$ let $A(z)=[a_{jk}(z)]_{1\leq j,k \leq n}$ be the $n\times n$ matrix whose components are given by
\begin{equation*}
a_{jj}(z) = 1+P_{jj}(z),
\end{equation*}
\begin{equation*}
a_{jk}(z) = P_{jk}(z), j \neq k.
\end{equation*}
We note that $A(0)=I_n$, hence $A$ is invertible in a neighborhood $\td{Z} \subset \td{W}$ of the origin. Applying the same construction to each point $p \in \td{S}$, we obtain neighborhoods $\td{Z}_p$ in $\td{S}$ on which the corresponding matrices are non-singular. By the compactness of $\td{S}$, assume that the cover given by such neighborhoods is finite, $\{\td{Z}_\nu\}_{1\leq \nu \leq s}, s \in \integ{+}$ and denote by $A_\nu$ the corresponding matrices. Let $\{\rho_\nu\}_{1\leq \nu \leq s}$ be a smooth partition of unity subordinate to this cover. In each $\td{Z}_\nu$, write the pullback of $\beta$ in local coordinates, $\iota^*\beta = \sum_{j=1}^n \beta_jdx_j$ and let $\beta_\nu^j := \rho_\nu \beta_j$, so
\begin{equation*}
\iota^*\beta = \sum_{j=1}^n \left( \sum_{\nu=1}^s \beta_\nu^j \right) dx_j.
\end{equation*}
Then for all $1 \leq \mu \leq s$, the 
linear system in $\td{\alpha}_1^\mu(z), \dots, \td{\alpha}_n^\mu(z)$, $z \in \td{Z}_\mu$,
\begin{equation}\label{eq:280}
\td{\alpha}_j^\mu(z) + \sum_{k=1}^n \td{\alpha}_k^\mu(z) P_{jk}^\mu(z) = \sum_{\nu=1}^s \beta_\nu^j(z), \hrs{10} j=1, \dots, n.
\end{equation}
has smooth solutions in $\td{Z}_\mu$, since the system matrix $A_\mu$ is invertible there. By defining $\alpha_j^\mu (z) := \td{\alpha}_j^\mu(\mr{x})$ for all $z = \mr{x} + i\mr{y}$ in a neighborhood $Z_\mu$ in $\compx{n}$ such that $\td{Z}_\mu = Z_\mu \cap \td{S}, Z_\mu \subset W_\mu$, and by applying formula (\ref{eq:281}) accordingly, we obtain a smooth function $\psi_\mu$ defined on $Z_\mu$ which satisfies $\iota^* d^c \psi_\mu = \iota^* \beta$. Let $\Omega := \cup_{\mu =1}^s Z_\mu$, hence $\td{S} \subset \Omega$. Since $\beta$ has compact support,  $\psi_\mu$ also has compact support, so we can extend it smoothly to $\compx{n}$ by letting it be equal to zero everywhere else. Then the smooth function given by $\td{\psi} := \sum_{\mu=1}^s \psi_\mu$ satisfies the required properties. If $\dim_\real{} S < n $, we embed $\td{S}$ in a compact, totally real submanifold $\td{M} \subset \compx{n}$ of real dimension $n$, apply the above construction to obtain $\psi$ for $\td{M} \cup (\cup_{j=1}^n U_j)$ and then restrict $\psi$ to $\td{S} \cup (\cup_{j=1}^n U_j)$.

Let $\lambda:=(\lambda_1, \dots, \lambda_l)$ be an $l$-tuple of rational numbers and define $\varphi_\lambda := \varphi + \lambda_1\psi_1 + \dots +\lambda_l\psi_l$. For sufficiently small $\lambda$ (i.e., each $\lambda_k$ is sufficiently small) $\varphi_\lambda$ is strictly plurisubharmonic in $\compx{n} \setminus \cup_{j=1}^N \close{V}_j$ and pluriharmonic on each $U_j$. Further, we can find $M \in \integ{}$ by adjusting $\lambda$ accordingly, such that $\int_{\gamma_k} \iota^*d^c\varphi_\lambda \in 2\pi\integ{}/M$, $k=1, \dots, l$. Let us recycle the notation of the initial function and define $\varphi := M\varphi_\lambda$. The form $\iota^* d^c\varphi$ is closed and it has periods which are integer multiples of $2\pi$.

Fix $p \in S$ and define $\mu : S \cup(\cup_{j=1}^N \close{U}_j) \rightarrow \real{}/2\pi \integ{}$ given by $\mu(z) := \int_p^z d^c\varphi$ which clearly satisfies $\iota^*d^c\varphi = d\mu$. Fix $j \in \{1, \dots, N\}$ and $z \in U_j$. Let $\gamma : [0,1] \rightarrow \compx{n}$ be a rectifiable path such that $\gamma(0)=p, \gamma(1)=z$. Then, the function $\mu_z(w):=\int_\gamma d^c\varphi + \int_{z}^w d^c\varphi$, $w \in U_j$, does not depend on the path from $z$ to $w$, since $d^c\varphi$ is closed in $U_j$. An easy computation shows that, in a neighborhood of $z$, $\varphi$ and $\mu_z$ satisfy the Cauchy-Riemann equations, hence the function $\varphi + i\mu_z$ is holomorphic in $U_j$. Because $\mu : S \cup(\cup_{j=1}^N \close{U}_j) \rightarrow \real{}/2\pi \integ{}$ and $\Arg e^{\varphi + i\mu} = \mu$, the function $h_1 : S \cup(\cup_{j=1}^N U_j) \rightarrow \compx{n}$ given as $h_1 = e^{\varphi + i\mu}$ is well defined and holomorphic in $U_j$, $j = 1, \dots, N$. By a classic result in \cite{8}, there exists a smooth function $h : \compx{n} \rightarrow \compx{}$ such that
\begin{equation*}\label{eq:28}
h|_S = h_1|_S,
\end{equation*}
\begin{equation*}
\close{\partial} h = O(dist(\cdot, S)^r), \text{ for all } r\in \integ{+}
\end{equation*}
and $h$ is holomorphic in neighborhoods $Z_j$ of $p_j$, $Z_j \Subset U_j$, where the boundary of $Z_j$ can be chosen to be arbitrarily "close" to that of $U_j$, i.e., for any given small tubular neighborhood $V$ of $\partial U_j$, we can find such $Z_j$ with $\partial Z_j$ being included in $V$. This completes Step 1 of the proof.

\vrs{5}
\noindent \it{Step 2: further perturbation of $\varphi$.} For each singular point $p_j \in S$ there exists a region, $S \cap (\close{V_j}\setminus Z_j)$, on which we cannot guarantee $\varphi$ to be strictly plurisubharmonic, nor do we know whether $h$ is holomorphic, therefore condition $(4)$ of Lemma~\ref{lem:4} may not be satisfied there. We address this issue by taking advantage of the local polynomial convexity of $S$. We make use of the following result (Lemma~\ref{lem:5}), which is essentially due to Fostneric and Stout \cite{9} who stated it for totally real discs with finitely many hyperbolic points in $\compx{2}$. Subsequently Shafikov and Sukhov stated it for Lagrangian inclusions in $\compx{2}$ \cite[Lemma 3.3]{6} and for a different setting in $\compx{n}$ \cite[Lemma 5.3]{10}. Again, $S$ denotes the immersion defined above with $p_1, \dots, p_N$ being its self-intersection points. 

\begin{lemma} \label{lem:5}
If $S$ is locally polynomially convex then, for all sufficiently small $\delta>0$, there exists a neighborhood $\Omega$ of $S$ in $\compx{n}$ and a continuous non-negative plurisubharmonic function $\rho : \Omega \rightarrow \real{}$ such that $S=\{\rho=0\}$ and $\rho=dist(\cdot, S)^2$ on $\Omega \setminus \cup_{j=1}^N B(p_j, \delta)$; in particular, $\rho$ is smooth and strictly plurisubharmonic there.
\end{lemma}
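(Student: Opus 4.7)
The plan is to build $\rho$ by combining two ingredients: the squared distance function on the smooth portion of $S$, and a local plurisubharmonic defining function near each singular point extracted from the polynomial convexity hypothesis. These are then patched together via the classical plurisubharmonic maximum construction.

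\textbf{Step 1 (smooth regime).} Away from the singular points $S$ is a smooth totally real submanifold, so by the standard fact recalled at the end of Section~\ref{sec:2}, $dist^2(\cdot, S)$ is smooth and strictly plurisubharmonic on a tubular neighborhood of the smooth locus of $S$. This handles the region $\Omega \setminus \cup_{j=1}^N B(p_j, \delta)$ and directly delivers the required identity $\rho = dist^2(\cdot, S)$ there.

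\textbf{Step 2 (singular points).} Fix $j$. By the local polynomial convexity hypothesis, for every sufficiently small $\delta_j < \delta$ the compact set $K_j := S \cap \close{B(p_j, \delta_j)}$ is polynomially convex. From this I would extract a continuous non-negative plurisubharmonic function $\rho_j$, defined on an open neighborhood $W_j$ of $K_j$, with $\{\rho_j = 0\} \cap W_j = K_j$. This is precisely the construction carried out, in somewhat different settings, in Forstneri\v c--Stout \cite{9} and Shafikov--Sukhov \cite{6}, \cite{10}: a polynomially convex compact set can be realized as the zero locus of a continuous non-negative plurisubharmonic function, obtained as an upper semi-continuous regularization of a supremum of functions of the form $|P_\alpha|^2 - c_\alpha$ where $P_\alpha$ is a family of polynomials adapted to $K_j$. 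This is the step in which the hypothesis of local polynomial convexity genuinely enters.

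\textbf{Step 3 (gluing).} Fix radii $\delta/2 < r_1 < r_2 < \delta$ and a scaling constant $c_j > 0$, and set
\begin{equation*}
\rho(z) := \begin{cases} c_j\, \rho_j(z), & z \in B(p_j, r_1), \\ \max\bigl(c_j\, \rho_j(z),\, dist^2(z, S)\bigr), & r_1 \leq |z - p_j| \leq r_2, \\ dist^2(z, S), & z \in \Omega \setminus \bigcup_{j=1}^N B(p_j, r_2). \end{cases}
\end{equation*}
The scaling $c_j$ must be chosen small enough that $c_j \rho_j < dist^2(\cdot, S)$ on a neighborhood of the outer sphere $|z - p_j| = r_2$ and, simultaneously, $c_j \rho_j > dist^2(\cdot, S)$ on a neighborhood of the inner sphere $|z - p_j| = r_1$ off $K_j$. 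On each of these annular shells $S$ is smooth and totally real, so $dist^2(\cdot, S)$ has the expected nondegenerate quadratic behavior in the normal direction while $\rho_j$ is continuous; a routine compactness estimate delivers an admissible $c_j$. Since the pointwise maximum of plurisubharmonic functions is plurisubharmonic and the definitions agree on the overlap regions, $\rho$ is continuous, non-negative, plurisubharmonic on $\Omega$, equals $dist^2(\cdot, S)$ off $\cup_j B(p_j, \delta)$ (and hence is smooth and strictly plurisubharmonic there), and vanishes exactly on $S$.

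The main obstacle I anticipate is Step~2: producing $\rho_j$ with the precise zero set $K_j$ from polynomial convexity alone. The references \cite{9}, \cite{6}, \cite{10} carry out the analogous construction for totally real discs with hyperbolic points in $\compx{2}$, Lagrangian inclusions, and a related setting in $\compx{n}$, respectively, and the force of the lemma is that the same argument goes through for the more general totally real immersions with arbitrary finite self-intersection patterns treated in this paper; what has to be checked is that the approximating polynomials and regularized maxima used in those references depend only on the polynomial convexity of $K_j$ and not on transversality or maximality of the immersion. Once $\rho_j$ is in hand, the gluing of Step~3 is routine.
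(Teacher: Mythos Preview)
Your strategy matches the paper's: the paper does not give its own proof but simply refers to \cite{10}, observing that the argument there ``applies verbatim'' because it rests only on local polynomial convexity and not on the particular type of self-intersection. Your Steps~1 and~2 correctly isolate the two ingredients, and your final paragraph correctly identifies the crux --- that the construction of $\rho_j$ in \cite{9}, \cite{6}, \cite{10} depends only on polynomial convexity of $K_j$.

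There is, however, a genuine gap in your Step~3. You ask a single constant $c_j$ to satisfy $c_j\rho_j<dist^2(\cdot,S)$ near the outer sphere and $c_j\rho_j>dist^2(\cdot,S)$ near the inner sphere, and call this a ``routine compactness estimate''. It is not. If $r_2<\delta_j$ (so that $S\cap\{r_1\le|z-p_j|\le r_2\}\subset K_j$), then both $\rho_j$ and $dist^2(\cdot,S)$ vanish on $S$ throughout the annulus; since $\rho_j$ is only continuous, its rate of vanishing off $S$ is uncontrolled, and there is no reason the ratio $\rho_j/dist^2(\cdot,S)$ should be small near one sphere and large near the other. If instead $\delta_j<r_1$, then on the outer sphere $\rho_j>0$ at points of $S$ (they lie outside $K_j$) while $dist^2(\cdot,S)=0$ there, so $c_j\rho_j<dist^2(\cdot,S)$ fails for every $c_j>0$. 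Either way the patching as written does not close up. The gluing carried out in \cite{9}, \cite{10} is more delicate: one typically builds the local function so that it already agrees with (or is comparable to) $dist^2(\cdot,S)$ on the transition annulus, or perturbs additively by a cut-off times $dist^2$ rather than relying on a bare rescaled maximum. You should either reproduce that argument or, as the paper does, defer to \cite{10}.
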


For a proof of the lemma we refer the reader to \cite{10}. The proof is essentially identical and it applies verbatum without requiring that the self-intersections be transverse or double. In fact, the main ingredient allowing the proof to flow through is the local polynomial convexity and not the type of singularities of $S$.

To finalize Step 2 in the proof of Proposition~\ref{prp:2}, let $\delta >0$ be small enough such that $B(p_j, \delta)\cap S \Subset W_j=U_j \cap S$, for all $j$. Let $\Omega$ be the neighborhood of $S$ and $\rho : \Omega \rightarrow \real{}$ the function given by Lemma~\ref{lem:5} corresponding to $\delta$. By hypothesis, we can assume that $V_j \Subset \Omega$ for all $j \in \{1, \dots, N\}$. By the  construction in Step 1, the neighborhoods $Z_j \Subset U_j$ can be chosen so that $\partial Z_j$ is arbitrarily close to $\partial U_j$, as defined above, thus we may assume that $B(p_j, \delta) \cap S \Subset Z_j \cap S$. We extend $\rho$ smoothly on $\compx{n} \setminus \Omega$ to a function which is still denoted by $\rho$, by multiplying it by a suitable cut-off function. Then, the function
\begin{equation*}
\td{\varphi} := \varphi + C\cdot \rho
\end{equation*} 
is plurisubharmonic for a sufficiently small $C>0$. Also, since $C>0$ and $\rho$ is strictly plurisubharmonic on $\Omega \setminus \cup_{j=1}^N B(p_j, \delta)$, it follows that $\td{\varphi}$ is strictly plurisubharmonic on $S$, everywhere $h$ is not holomorphic. Removing the tilde and denoting $\td{\varphi}$ by $\varphi$, it follows that the pair $(h, \varphi)$ satisfies the conditions of Lemma~\ref{lem:4} which shows that the set $X = \{\abs{h} = e^{\varphi} \}$ is rationally convex. Clearly, $S \subset X$ and, by multiplying $h$ with a suitable cut-off function, we can construct for any neighborhood $\Omega$ of $S$ such a set $X$ which is included in $\Omega$. This proves that $S$ is rationally convex.

\begin{remark}
In \cite{2}, the final perturbation of $\varphi$ is done without the requirement of $S$ to be locally polynomially convex. We were not able to adapt this technique to our more general case.
\end{remark}

\section{Examples} \label{sec:5}

\subsection{Rationally Convex Immersions in $\compx{2}$ that are not Isotropic with respect to any K\"ahler Form }
By Theorem~\ref{th1}, $(i) \Rightarrow (ii)$, the rational convexity of $S$ implies the existence of a nonnegative closed form of bidegree $(1,1)$, $\omega := dd^c \varphi$, defined on $\compx{n}$, which is positive outside some pairwise disjoint neighborhoods $V_j$ of $p_j$, $j=1,\dots, N$, it is identically zero on some smaller neighborhoods $p_j \in U_j \subset V_j$ and $S$ is isotropic with respect to $\omega$. It is natural to ask whether the same assumption of rational convexity for $S$ leads to the existence of a genuine K\"ahler form with respect to which $S$ is isotropic, similar to the result in \cite{1}. We show in the following that there exist compact immersions in $\compx{2}$ with one transverse self-intersection which are rationally convex but which are not Lagrangian with respect to any K\"ahler form and, in fact, that this class of immersions is not just an isolated case.

Denote by $\mathcal{W}$ the set of all $2\times 2$ matrices with real entries such that for all $A \in \mathcal{W}$ the following properties are satisfied:
\begin{enumerate}[(a)]
\item $(A+i)$ is invertible;
\item $i$ is not an eigenvalue of $A$;
\item $A$ has no purely imaginary eigenvalue of modulus greater than or equal to $1$.
\end{enumerate}
It is straightforward to show that $\mathcal{W}$ is an open subset of the space of $2\times 2$ matrices with real entries, $\mathcal{M}_{2 \times 2}(\real{})$. Let $A = \begin{bmatrix}
x&y\\
z&w
\end{bmatrix} \in \mathcal{W}$ and define the following $2$-dimensional subspace of $\compx{2}$
\begin{equation*}\label{eq:29}
M(A) := (A+i)\real{2}.
\end{equation*}
It can easily be verified that condition (b) above is equivalent to $M(A)$ being totally real. By a result of Weinstock \cite{4} it follows that every compact subset of $M(A) \cup \real{2}$ is polynomially convex, hence rationally convex. For some $r>0$ let $S_1(A):= \real{2} \cap \close{B(0, r)}$, $S_2(A):= M(A) \cap \close{B(0, r)}$ and
\begin{equation*}\label{eq:30}
S(A):= S_1(A) \cup S_2(A).
\end{equation*}
Then $S(A)$ is a totally real, compact, rationally convex surface in $\compx{2}$, smooth everywhere except at the origin where it has a double self-intersection. Let $\iota_1, \iota_2 : \real{2}_{(t,s)}\rightarrow \compx{2}_{(z_1, z_2)}$ be the maps given as
\begin{gather*}\label{eq:31}
\iota_1(t,s) = (t,s),\\
\iota_2(t,s) = (xt + yt + it, zt+ws + is),
\end{gather*}
which satisfy $\iota_1(\real{2}) = \real{2}= T_0S_1(A)$ and $\iota_2(\real{2})=M(A)= T_0S_2(A)$. Suppose that $S(A)$ is Lagrangian with respect to some K\"ahler form $\omega$. This implies in particular that the restriction of $\omega(0)$ to the two tangent spaces of $S(A)$ at the origin vanishes or, equivalently, $\iota_1^*\omega = 0$ and $\iota_2^*\omega$ = 0, where $\iota_1^*, \iota_2^*$ denote the respective pullbacks. Since $\omega$ is K\"ahler, we can write
\begin{equation*}\label{eq:32}
\omega = h_1 dz_1 \wedge d\close{z}_1 + \close{h} dz_1 \wedge d\close{z}_2 + h dz_2 \wedge d\close{z}_1 + h_2 dz_2 \wedge d\close{z}_2,
\end{equation*}
where at each point $p \in \compx{2}$, $H_\omega(p):=\begin{bmatrix}
h_1(p)&\close{h}(p)\\
h(p)&h_2(p)
\end{bmatrix}$ is a positive definite Hermitian matrix. The rest of our analysis takes place at the origin, and for simplicity, we shall use the notations $H_\omega:=H_\omega(0), h_1:=h_1(0)$, etc. Direct calculations give
\begin{equation}\label{eq:32}
\iota_1^*\omega = (\close{h}-h) dt \wedge ds,
\end{equation}
and
\begin{equation}\label{eq:33}
\iota_2^*\omega = \{(\close{h}-h)(xw-yz+1) + i[2h_1y - 2h_2z  -(\close{h}+h)(x-w) ]\}dt \wedge ds.
\end{equation}
The condition $\iota_1^*\omega = 0$ implies $h=\close{h} \in \real{}$ so $H_\omega$ has only real entries. Then (\ref{eq:33}) becomes
\begin{equation*}\label{eq:34}
\iota_2^*\omega = 2i(-hx+hw+h_1y-h_2z)dt \wedge ds.
\end{equation*}
and $\iota_2^*\omega=0$ gives
\begin{equation}\label{eq:35}
hx-hw-h_1y+h_2z = 0.
\end{equation}
We just showed that, if $A \in \mathcal{W}$ and $\omega$ is a K\"ahler form in $\compx{2}$ such that $S(A)$ is Lagrangian with respect to $\omega$ then, at the origin, the Hermitian matrix $H_\omega$ associated with $\omega$ is in fact a positive definite symmetric matrix with real entries and equation (\ref{eq:35}) has to be satisfied.

Now, let $\td{A}=\begin{bmatrix}
1&1\\
-1&2
\end{bmatrix}$. It is easy to see that $\td{A} \in \mathcal{W}$, hence $S(\td{A})$ is rationally convex. Suppose that $S(\td{A})$ is Lagrangian with respect to some K\"ahler form $\omega$ in $\compx{2}$. Then, $\td{A}$ has to satisfy equation (\ref{eq:35}). Substituting the entries of $\td{A}$ in (\ref{eq:35}) we obtain $h= -(h_1+h_2)$. Recall that $H_\omega=\begin{bmatrix}
h_1&h\\
h&h_2
\end{bmatrix}$ is positive definite which, by Sylvester's criterion, is equivalent to $H_\omega$ satisfying $h_1>0$ and $\det H_\omega >0$, which also implies that $h_2>0$. However, since $h= -(h_1+h_2)$, we have $\det H_\omega = h_1h_2-h^2= -h_1^2-h_2^2-h_1h_2 <0$ which is a contradiction. It follows that $S(\td{A})$ is not Lagrangian with respect to any K\"ahler form $\omega$. Note that by Theorem~\ref{th1} there exist a family of degenerate K\"ahler forms with respect to which $S(\td{A})$ is indeed isotropic. Furthermore, the following holds.
 
\begin{proposition}\label{prp:3}
The set of matrices $A \in \mathcal{W}$ such that $S(A)$ is not Lagrangian with respect to any K\"ahler form defined on $\compx{2}$ contains a nonempty subset which is open in $\mathcal{M}_{2 \times 2}(\real{})$.
\end{proposition}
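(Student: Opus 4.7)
The plan is to distill from the analysis preceding the proposition a purely algebraic obstruction and show that it is an open condition on $A$. The author has just established the following necessary condition for $S(A)$ to be Lagrangian with respect to some K\"ahler form $\omega$ on $\compx{2}$: at the origin, the matrix
\[
H_\omega(0) = \begin{bmatrix} h_1 & h \\ h & h_2 \end{bmatrix}
\]
must be real, symmetric, positive definite, and must satisfy the single linear constraint $h(x-w) - h_1 y + h_2 z = 0$, where $A = \begin{bmatrix} x & y \\ z & w \end{bmatrix}$. It therefore suffices to exhibit a nonempty open subset of $\mathcal{W}$ on which this constrained system has no positive definite solution.

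I would set
\[
\mathcal{U} := \bigl\{ A \in \mathcal{W} : (x-w)^2 + 4yz < 0 \bigr\},
\]
which is open in $\mathcal{M}_{2\times 2}(\real{})$ as the intersection of two open conditions, and which contains $\td{A}$ because $(1-2)^2 + 4 \cdot 1 \cdot (-1) = -3 < 0$. The claim is then that for every $A \in \mathcal{U}$ no positive definite real symmetric $H$ satisfies the linear constraint, so in particular no K\"ahler form on $\compx{2}$ can make $S(A)$ Lagrangian.

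The verification of this claim splits into two cases. When $x \neq w$, I would solve the linear constraint for $h$, normalize $h_1 = 1$ by homogeneity, and substitute into Sylvester's inequality $h_1 h_2 - h^2 > 0$; this reduces to a quadratic inequality in $h_2$ whose discriminant factors as $(x-w)^2\bigl((x-w)^2 + 4yz\bigr)$. Under the hypothesis this discriminant is strictly negative, so with positive leading coefficient $z^2$ the quadratic is everywhere strictly positive and no admissible $h_2$ exists. When $x = w$ the hypothesis $(x-w)^2+4yz < 0$ forces $yz < 0$, and in particular $y, z \neq 0$; the linear constraint then becomes $h_1 = (z/y) h_2$ with $z/y < 0$, which is incompatible with $h_1, h_2 > 0$.

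The only point demanding care is confirming that the case analysis exhausts all Hermitian positive definite candidates for $H_\omega(0)$ and is not an artifact of a normalization; this follows because the vanishing of $\iota_1^*\omega$ at the origin already forced $H_\omega(0)$ into real symmetric form, and the linear constraint is precisely the coordinate expression of $\iota_2^*\omega(0) = 0$. Once the algebraic claim is verified, the openness of $\mathcal{U}$ in $\mathcal{M}_{2\times 2}(\real{})$ makes the proposition immediate.
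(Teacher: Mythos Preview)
Your proposal is correct and follows essentially the same strategy as the paper: isolate an open polynomial condition on the entries of $A$ under which the linear constraint $h(x-w)-h_1y+h_2z=0$ is incompatible with positive definiteness of $H_\omega(0)$. The only difference is cosmetic: the paper works on the open set $\{A\in\mathcal{W}: x\neq w,\ (x-w)^2+2yz<0\}$ and observes directly that the numerator of $\det H_\omega$ equals $h_1h_2\bigl[(x-w)^2+2yz\bigr]-h_1^2y^2-h_2^2z^2<0$, avoiding any discriminant computation or case split; your set $(x-w)^2+4yz<0$ is slightly larger (it strictly contains the paper's set and also allows $x=w$), at the price of the extra case analysis.
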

\begin{proof}
Define the set
\begin{equation*}
\td{\mathcal{W}}:= \left\{\begin{bmatrix}
x&y\\
z&w
\end{bmatrix} \in \mathcal{W} : x-w \neq 0   \right\}.
\end{equation*}
Since $\mathcal{W}$ is open in $\mathcal{M}_{2 \times 2}(\real{})$ it is immediate that $\td{\mathcal{W}}$ is also open in $\mathcal{M}_{2 \times 2}(\real{})$. $\td{A} \in \td{\mathcal{W}}$, where $\td{A}$ is the matrix we defined in the example above, hence $\td{\mathcal{W}} \neq \emptyset$. Define
\begin{equation*}\label{eq:37}
F:\td{\mathcal{W}} \ni \begin{bmatrix}
x&y\\
z&w
\end{bmatrix} \mapsto x^2+w^2-2xw+2yz \in \real{},
\end{equation*}
which is a continuous function. Therefore, $F^{-1}((-\infty, 0))$ is open in $\mathcal{M}_{2 \times 2}(\real{})$ and it is nonempty because $\td{A} \in F^{-1}((-\infty, 0))$. Let $A:=\begin{bmatrix}
x&y\\
z&w
\end{bmatrix} \in F^{-1}((-\infty, 0))$ and suppose that $S(A)$ is Lagrangian with respect to some K\"ahler form $\omega$. Then, (\ref{eq:35}) is satisfied: $h(x-w)-h_1y+h_2z = 0$ and, since $x-w \neq 0$,
\begin{equation*}\label{eq:38}
h= \frac{h_1y-h_2z}{x-w}.
\end{equation*}
A direct computation gives
\begin{equation}\label{eq:39}
\begin{split}
\det H_\omega = h_1h_2 - h^2 &= \frac{h_1h_2(x^2+w^2-2xw+2yz) - h_1^2y^2 - h_2^2z^2}{(x-w)^2}\\ &= \frac{h_1h_2F(A) - h_1^2y^2 - h_2^2z^2}{(x-w)^2}.
\end{split}
\end{equation}
But $h_1h_2 >0, h_1^2y^2\geq 0, h_2^2z^2 \geq 0$ and $F(A)<0$ by construction, hence $\det H_\omega <0$ which is a contradiction. It follows that for every element $A$ of the nonempty open set $F^{-1}((-\infty, 0))$, $S(A)$ cannot be Lagrangian with respect to any K\"ahler form on $\compx{2}$, which ends the proof.
\end{proof}

\subsection{An Immersion in $\compx{2}$ which is not Locally Polynomially Convex}
Consider the matrix
\begin{equation*}
A = \begin{bmatrix}
0&-t\\
t&0
\end{bmatrix},
\end{equation*}
where $t>1$. Maintaining the notations from the previous subsection, let $S := M(A) \cup \real{2} \subset \compx{2}$. By a result of Weinstock \cite[Theorem 4]{4}, there exits a continuous one-parameter family of analytic annuli in $\compx{2}$ whose boundaries lie in $S$ and that converges to the origin as the parameter approaches $0$. Let $K := S \cap \close{B(0, \delta)}$ for some $\delta >0$. Clearly $K$ is not polynomially convex since, every annulus attached to $K$ is included in $\wh{K}$ by the maximum principle, hence $S$ is not locally polynomially convex at the origin. If $p$ is a point in the interior of one of the annuli attached to $K$ and $V$ is an algebraic variety passing through $p$, then it is known that $V$ should intersect all the members of the family. This implies that $V$ would intersect either the boundaries of the annuli or it would pass through the origin. Either way, $V$ would intersect $K$ which proves that $p \in \rhull{K}$, hence $K$ is not rationally convex.

\end{document}